\newsavebox{\@brx}
\newcommand{\llangle}[1][]{\savebox{\@brx}{\(\m@th{#1\langle}\)}%
  \mathopen{\copy\@brx\kern-0.5\wd\@brx\usebox{\@brx}}}
\newcommand{\rrangle}[1][]{\savebox{\@brx}{\(\m@th{#1\rangle}\)}%
  \mathclose{\copy\@brx\kern-0.5\wd\@brx\usebox{\@brx}}}
\newcommand{\Ccal}{{\mathcal{C}}}
\newcommand{\RR}{\mathbb{R}}
\newcommand{\rot}{\operatorname{Rot}}
\newtheorem{theorem}{Theorem}
\newtheorem*{theorem*}{Theorem}
\newtheorem{proposition}{Proposition}[section]
\newtheorem{lemma}[proposition]{Lemma}
\newtheorem{conjecture}[proposition]{Conjecture}
\newtheorem{corollary}[theorem]{Corollary}
\theoremstyle{definition}
\newtheorem{definition}[proposition]{Definition}
\theoremstyle{remark}
\newtheorem{remark}[proposition]{Remark}
\numberwithin{equation}{section}
\newcommand{\bparagraph}[1]{\vspace{1em}\noindent\textbf{#1}.\hspace{0.2cm}}
\begin{document}

\title[]{On the causal discontinuity of Morse spacetimes}

\author{Lucas Dahinden, Liang Jin}

\subjclass[2020]{Primary 53C50; Secondary 83C75, 53Z05}
\keywords{degenerate spacetimes, Morse spacetimes, topology change, causal continuity, Borde–Sorkin conjecture}

\date{\today}

\begin{abstract}
Morse spacetime is a model of singular Lorentzian manifold, built upon a Morse function which serves as a global time function outside its critical points. The Borde--Sorkin conjecture states that a Morse spacetime is causally continuous if and only if the index and coindex of critical points of the corresponding Morse function are both different from 1. The conjecture has recently been confirmed by Garcia Heveling for the case of small anisotropy and Euclidean background metric. Here, we provide a complementary counterexample: a four dimensional Morse spacetime whose critical point has index 2 and large enough anisotropy is causally discontinuous and thus the Borde--Sorkin conjecture does not hold. The proof features a low regularity causal structure and causal bubbling.
\end{abstract}

\maketitle

\tableofcontents

\section{Introduction}\label{sec:intro}

\bparagraph{Historical Background}
In general relativity, spacetime is called globally hyperbolic if it admits a Cauchy surface, i.e., a spatial slice of the universe through which every inextendable causal curve passes exactly once. Global hyperbolicity has turned out to be a very useful property as it allows to understand Einstein's equations as a Cauchy problem. By a result of R.\ Geroch \cite{Ge2}, every globally hyperbolic spacetime is topologically the product of a Cauchy surface $\Sigma$ (space) and the real numbers $\RR$ (time). Later, it was proved that the topological splitting is smooth and the metric has an orthogonal splitting $g(x,t)=h(x,t) - f(x,t)dt^2$, where $f$ is a positive function and for each $t\in\RR, h(t,\cdot)$ is a Riemannian metric on~$\Sigma$, see~\cite{BS}. In particular, any two Cauchy surfaces of the spacetime are diffeomorphic to each other. This fact suggests that spatial topology change is incompatible with the classical globally hyperbolic model of the universe. 

\vspace{1em}
However, in the modern theory of Quantum Gravity spatial topology change may be rather natural. J.Wheeler \cite{Wheeler} argued in the 1950s that quantum fluctuations of the spacetime should produce topological modification. Because of the result of Geroch, this requires models of spacetime with non-standard features. One of the first approaches that was extensively studied was by A.Anderson and B.DeWitt~\cite{AdW} who investigated a quantum field theory for a scalar mass field on the trousers spacetime. Their non-standard approach was to allow the metric to be hermitian in a neighborhood of the crotch in order to resolve the topology change for transitions between the waist band $S^1$ and the foot holes $S^1\cup S^1$. However, they found that this approach leads to infinite energy spikes and infinite particle generation. 

\vspace{1em}
A different non-standard approach, introduced by P.Yodzis~\cite{Yodzis}, were Lorentz cobordisms. Later, they were studied under the name \textit{Morse spacetime} by R.D.Sorkin and J.Louko in \cite{Sor1} and \cite{LS}, and this is the name we adapt here, see Section~\ref{subsec:MorseSpacetimes} for more details. For now, let us note that, like Morse theory, Morse spacetimes feature critical points. At these points the Lorentz metric degenerates and the topology of space changes.

\vspace{1em}
Some Morse spacetimes lack causal continuity (a notion introduced by S.Hawking and R.Sachs~\cite{HS}): The simplest Morse function on the trousers spacetime has one unique critical point in the crotch. If any relativistic PDE problem is set up as a Cauchy problem with initial conditions in the legs, the solution will propagate in a standard fashion up the legs. But there is no a priori reason the solutions will extend continuously over the crotch because below the crotch the two legs cannot ``see each other''. This loss of causal continuity happens always if the stable or unstable manifold of the critical point is disconnected, i.e., at index or coindex 1 critical points~\cite{DGS}. This is natural from the Morse point of view, but it may be surprising from the Lorentz point of view since in classical Lorentz geometry global hyperbolicity implies causal continuity, so the introduction of Morse singularities perturbs the classical causal hierarchy described in~\cite{MS}. 

\vspace{1em}
Sorkin conjectured that causally continuous Morse spacetimes present physically meaningful models for topology change~\cite{Sor1}. From a Morse theory point of view it is clear that if the index and coindex of the critical point is different from 1, then the stable and unstable manifolds are connected, so in this case the topological reason for the causal discontinuity disappears. Borde and Sorkin conjectured that for Morse spacetimes there is no reason for causal discontinuity beyond the disconnectivity of stable and unstable manifolds:
\begin{conjecture}[Borde--Sorkin]
    A Morse spacetime is causally continuous if and only if all critical points of the Morse function have index and coindex different from 1.
\end{conjecture}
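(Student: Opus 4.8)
Since the abstract announces that the paper \emph{disproves} the nontrivial direction of the conjecture, the statement to be established is really: there exists a four-dimensional Morse spacetime, every critical point of whose Morse function has index and coindex different from $1$, that is nevertheless causally discontinuous. My plan is to build such a spacetime explicitly around a single index-$2$ (hence also coindex-$2$) critical point. Following the recipe of Section~\ref{subsec:MorseSpacetimes}, I would take Morse coordinates $x=(x_1,x_2,x_3,x_4)$ near $0\in\RR^4$ with $f(x)=-x_1^2-x_2^2+x_3^2+x_4^2$, fix a Riemannian background metric $h$, and form the Sorkin-type Lorentzian metric, schematically $g_h=h-\mu\,\tfrac{df\otimes df}{|df|_h^2}$ with $\mu>1$ on $\RR^4\setminus\{0\}$ (suitably regularized at $0$, where $g_h$ becomes a degenerate symmetric $2$-tensor), and then glue this local model into an honest Lorentz cobordism between closed $3$-manifolds (which differ by a surgery on an embedded circle), having this as its unique critical point. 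The \emph{anisotropy} enters by choosing $h$ to be strongly stretched along the unstable $2$-plane $P_u=\{x_1=x_2=0\}$ relative to the stable $2$-plane $P_s=\{x_3=x_4=0\}$ (equivalently, by taking $\mu$ large); this widens the light cones in the $P_u$-directions while keeping them narrow in the $P_s$-directions.

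The heart of the matter is a careful analysis of causal curves in a small ball $B_\varepsilon(0)$, where $g_h$ degenerates and is merely continuous, so the smooth causal calculus is unavailable: one must either invoke the causal theory of continuous Lorentzian metrics or, more robustly, argue directly with explicit causal curves and cone-field comparisons. The phenomenon I would isolate is \emph{causal bubbling}. I expect that for large enough anisotropy a future-directed causal curve starting just below $0$ can, by skimming past the degenerate point, spread far along the $P_u$-directions while essentially not spreading along the $P_s$-directions; conversely, curves that stay away from $0$ are confined by narrow, nondegenerate cones. Concretely, I would choose a point $q$ slightly to the future of the topology change and exhibit a region $B$, sitting below $0$ and displaced along $P_s$, that lies in $I^-(q)$ \emph{only} because past-directed causal curves from $q$ may run through the degenerate cone at $0$; then I would show that perturbing $q$ off the axis, or sliding it down toward $0$, produces points $q'$ for which every past-directed causal curve to $B$ must instead pass through a narrow nondegenerate cone, so $B\not\subseteq I^-(q')$. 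A sequence $q_n\to q$ of such $q'$ then witnesses the failure of outer continuity of $p\mapsto I^-(p)$ — equivalently, of the reflecting property — so, together with the easy verification that the spacetime is distinguishing, it is causally discontinuous in the sense of Hawking--Sachs~\cite{HS}. Since the unique critical point has index and coindex both equal to $2$, this contradicts the ``if'' direction of the Borde--Sorkin conjecture and shows that the Euclidean/small-anisotropy hypotheses in Garcia Heveling's theorem cannot be removed.

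Three points will need care, and I expect the last two to be the genuine obstacles. First, \emph{globalization}: one must check that the index-$2$ local model extends to a smooth compact cobordism on which $f$ is an honest time function away from $0$; this is routine Morse-theoretic bookkeeping but has to be recorded. Second, \emph{low regularity}: since $g_h$ is degenerate at $0$, the behaviour of causal curves passing through $0$ must be controlled by hand — I would blow up or parabolically rescale at $0$, or compare with auxiliary smooth cone fields pinching onto the degenerate cone, in order to be sure both that the bubble region $B$ is genuinely reachable from $q$ and that it is genuinely unreachable from the nearby $q'$. Third, \emph{the threshold}: making ``large enough anisotropy'' precise reduces to an elementary but delicate estimate — in essence a one-parameter geodesic/ODE comparison on a small sphere $\partial B_\varepsilon(0)$ measuring how far a causal segment can drift in $P_u$ versus in $P_s$ as it crosses the ball — and one needs a clean sufficient condition on $\mu$ (or on the stretch of $h$) under which $B\neq\emptyset$. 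Everything ultimately rests on showing that the degenerate cone at the index-$2$ point is at once ``too wide'' in one plane and ``too narrow'' in the other for reflectivity to survive, and that this is a stable, quantitative feature of the model rather than an artefact of the particular family of test curves.
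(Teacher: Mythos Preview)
Your proposal has the right overall shape — build an index-$2$ local model, push the anisotropy far enough, and exhibit a failure of reflectivity via causal bubbling — but the anisotropy is placed where it does no work. Stretching $h$ along $P_u$ relative to $P_s$ is, after a linear change of coordinates, the same as keeping $h$ Euclidean and taking Morse coefficients of the form $(-1,-1,c,c)$: the same-sign ratios are all $1$, so this is (in the sense that matters) still the isotropic model of~\cite{BDGSS}, which is known to be causally \emph{continuous}. Likewise, taking $\mu=\zeta$ large is not ``equivalent'' to any anisotropy at all: by Lemma~\ref{lem:zetatheta} it merely narrows the opening angle $\theta$ uniformly, leaving the cone rotationally symmetric about $\nabla f$. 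The mechanism that actually produces discontinuity, inherited from the two-dimensional picture of Subsection~\ref{subsec:TwoDim}, requires a large ratio between two coefficients of the \emph{same} sign — in the paper, stable coefficients $-1$ and $-b$ — so that the gradient field in that $2$-plane acquires real eigendirections which dynamically partition the past the way a disconnected unstable sphere would topologically. Your ``wide cones in $P_u$, narrow in $P_s$'' picture is not what is happening.

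Once the anisotropy is put in the right place (coefficients $-1,-b,1,1$ with Euclidean $h$), the paper's route is also more concrete than the blow-up/comparison program you sketch. The equal unstable coefficients give a rotational symmetry in $\mathbf y$, so one first quotients to $\RR^2\times\RR_{>0}$ and then projects radially to the affine plane $\{y=1\}$; the projected cone field is explicit (Lemma~\ref{lem:CriterionC2}), and the argument reduces to finding a single non-timelike \emph{barrier} curve in that plane separating the projection of $p$ from the component of $\Omega^-$ containing $q$. The barrier is built in three pieces: an arc of $\partial\Omega^+$, a short interpolation that exploits the $\tfrac12$-H\"older (non-Lipschitz) dependence of the extremal lightlike direction near $\partial\Omega^+$ to drift off that curve (this is where the bubbling lives, cf.\ Lemma~\ref{lem:FloatAway}), and a hyperbolic arc escaping to infinity. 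The inclusion $I^+(p)\subseteq I^+(q)$ is the easy half, done by an explicit push-up through the origin; the hard half $I^-(q)\nsubseteq I^-(p)$ is exactly the existence of this barrier, and it is here that ``$b$ large'' is genuinely needed.
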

A first step towards the proof of this conjecture was made in~\cite{BDGSS}, where it was shown that the specific Morse function on $\RR^n\oplus\RR^m$ with $n,m\geq 2$ given by $f(\mathbf x,\mathbf y)=-\|\mathbf x\|^2+\|\mathbf y\|^2$ (and Euclidean background metric) results in a causally continuous spacetime. Recently, Garcia Heveling proved that the Borde--Sorkin conjecture is true for a much larger family of Morse spacetimes~\cite{GH}: the Borde--Sorkin conjecture is true under the assumption that the critical points are not too anisotropic, i.e., that coefficients of the Morse function have ratio close to $\pm1$ (and that the background metric is Euclidean). 

On the other hand, Garcia Heveling conjectured that in case of too much anisotropy the Borde--Sorkin conjecture is wrong: he thought that one could find a dynamical separation that - similar to the topological disconnectivity of the unstable manifold in the trousers spacetime - creates causal discontinuity, see~\cite[Section 3.2]{GH} or Subsection~\ref{subsec:TwoDim} below. We indeed produce here an (strongly anisotropic) example of four dimensional Morse spacetime whose critical point has index and coindex 2, but is causally discontinuous, thus contradicting the Borde--Sorkin conjecture.

\begin{theorem}\label{thm:main}
Consider a Morse spacetime that admits a local chart $(\RR^4,h,f,\zeta=2)$ where the metric is Euclidean $h=\sum^{2}_{i=1}[(dx_i)^2+(dy_i)^2]$ and the Morse function is
\begin{equation}\label{ex-mf}
f(x)=\frac{1}{2}[-(x_1)^2-b(x_2)^2+(y_1)^2+(y_2)^2].
\end{equation}
There is a constant $b_0>0$ such that this spacetime is causally discontinuous for all $b>b_0$. More precisely, the points $$q=(\sqrt2+1 ,0,1,0),\quad p=(-(\sqrt 2-1),0,1,0)$$ are points of causal discontinuity. 
\end{theorem}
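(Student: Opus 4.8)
The plan is to show the spacetime is causally discontinuous by violating past-reflectivity at the pair $(q,p)$: I will prove $q\in\overline{I^-(p)}$ — points near $q$ reach $p$ along timelike curves — while $p\notin\overline{I^+(q)}$ — a whole neighbourhood of $p$ is unreachable from $q$ along timelike curves. Since a causally continuous spacetime is reflecting, this gives the theorem, with $q$ and $p$ the witnessing pair. Throughout I use the explicit description of the causal cones recalled in Section~\ref{subsec:MorseSpacetimes}: for $\zeta=2$ the future cone at a non-critical point $x$ is the $45^\circ$ half-angle cone about $\nabla_h f(x)$, so a curve is future causal exactly when its velocity makes angle at most $\pi/4$ with $\nabla f$. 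Two observations frame everything. First, the plane $\Pi=\{x_2=y_2=0\}$ is $\nabla f$-invariant and $f|_\Pi=\tfrac12(-x_1^2+y_1^2)$ is the standard $2$-dimensional saddle, so the causal structure on $\Pi$ is that of the trousers spacetime; more generally the $3$-plane $\Sigma_0=\{x_2=0\}$ carries the Morse structure of the index-$1$ saddle $\tfrac12(-x_1^2+y_1^2+y_2^2)$, and $q,p\in\Pi\subset\Sigma_0$. Second — and this explains the constants $\sqrt2\pm1$ — since $\tan(\pi/8)=\sqrt2-1$ and $\cot(\pi/8)=\sqrt2+1$, a direct computation shows that the ray from $q$ towards the critical point $\sigma=0$ is a future null pregeodesic (its direction makes angle $\pi/4$ with $\nabla f$ all along), and that the ray issuing from $\sigma$ in the direction of $p$ is likewise a future null pregeodesic passing through $p$; thus $q$ and $p$ are joined by a single rigid broken null geodesic through $\sigma$, and morally this is the only causal connection between them.

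For the bubbling $q\in\overline{I^-(p)}$ it suffices to work inside $\Pi$ (or $\Sigma_0$). Perturb $q$ to $q_t$ with angular coordinate slightly less than $\pi/8$ (for instance lower $y_1$ by $t$). Then $q_t$ no longer lies on the rigid null ray, and there is a future timelike curve that runs almost radially from $q_t$ to a point very close to $\sigma$, rounds off through a small punctured neighbourhood of $\sigma$ (not through the singular point itself), emerges close to the unstable manifold, and then steers over to $p$; that the endpoint can be arranged to be exactly $p$ follows from a one-parameter continuity argument on the steering parameter, using that the cone field on $\Pi$ is explicitly integrable in polar coordinates. Letting $t\to0$ yields $q_t\to q$ with $q_t\in I^-(p)$, hence $q\in\overline{I^-(p)}$. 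The anisotropy parameter $b$ plays no role here.

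The heart of the matter is $p\notin\overline{I^+(q)}$, and this is where $b>b_0$ is essential. I would argue in two steps. \emph{(i) A trapping estimate in the anisotropic direction.} For $x_2>0$ the cone axis $\nabla f=(-x_1,-bx_2,y_1,y_2)$ has the large component $-bx_2$ pulling back towards $\{x_2=0\}$; because the causal cones near $\sigma$ depend only on the direction from $\sigma$, this forces any future causal curve issuing from $q$ to remain within angle $C/b$ of the hyperplane $\{x_2=0\}$ — equivalently, inside a slab $\{|x_2|<\delta(b)\}$ with $\delta(b)\to0$ as $b\to\infty$ — so that $I^+(q)$ cannot use the second stable direction to get around $\sigma$. \emph{(ii) A separation argument in $\Sigma_0$.} Inside such a slab the problem reduces, up to an estimate controlling the thin $|x_2|$-layer, to the index-$1$ saddle on $\Sigma_0$, where one shows that the chronological future of $q$ stays on the near side of $q$'s outgoing null geodesic and, crucially, cannot spread into the interior of the future cone $J^+(\sigma)$: the connection from $q$ to $\sigma$ is rigid, no timelike curve from $q$ reaches $\sigma$, and the extra room available in $\Sigma_0$ (compared with $\Pi$) does not create one. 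Since $p$ lies on the boundary null geodesic $\sigma\to p$, a whole neighbourhood of $p$ is then disjoint from $I^+(q)$. Concretely I would build a continuous function, assembled from the angular coordinate in the $(x_1,y_1)$-plane and penalty terms in $|x_2|$ and $|y_2|$, that is monotone along future causal curves from $q$ and separates $q$'s chronological future from a neighbourhood of $p$. One must keep $I^+$ and $J^+$ distinct here: the rigid broken null geodesic does place $p\in\overline{J^+(q)}$, and it is precisely the strict inclusion $\overline{I^+(q)}\subsetneq\overline{J^+(q)}$ — the ``causal bubble'' around $p$ — that constitutes the discontinuity.

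Finally, $q\in\overline{I^-(p)}$ together with $p\notin\overline{I^+(q)}$ contradicts past-reflectivity and hence causal continuity; taking $b_0$ to be the threshold furnished by step (i) finishes the proof. I expect steps (i)–(ii) to be the main obstacle: the trapping estimate has to control genuinely $4$-dimensional causal curves in a neighbourhood of the critical point $\sigma$, where the causal structure is merely continuous (it varies with the direction of approach to $\sigma$), so the usual smooth causality toolkit is unavailable and one must proceed by explicit barrier functions and comparison with the exactly integrable radial curves; obtaining the dependence on $b$ sharply enough to close the bubble around $p$ is the delicate point.
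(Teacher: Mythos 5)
Your overall strategy---violating reflectivity at the pair $(q,p)$---matches the paper's, and you correctly identify the rigid broken null pregeodesic $q\to\sigma\to p$ and the $\tan(\pi/8)$ arithmetic behind the constants $\sqrt2\pm1$. But you have the two sides of the reflectivity failure exactly backwards, and both of your headline claims are false: the paper establishes their negations. Lemma~\ref{lem:IplusInclusion} gives $I^+(p)\subseteq I^+(q)$, hence $p\in\overline{I^+(p)}\subseteq\overline{I^+(q)}$, contradicting your claim $p\notin\overline{I^+(q)}$; and the proof of Lemma~\ref{lem:IminusExclusion} actually shows $I^-(q)\cap I^-(p)=\emptyset$, which together with openness of the chronology relation at the regular point $q$ forces $q\notin\overline{I^-(p)}$ (take any $r\ll q$; then $r\ll v$ for all $v$ in a neighbourhood of $q$, and if some such $v$ lay in $I^-(p)$ one would get $r\in I^-(q)\cap I^-(p)$, a contradiction). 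So your $q\in\overline{I^-(p)}$ is also false.

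The concrete error is in your step (ii). Inside $\Sigma_0=\{x_2=0\}$ the critical point has index $1$ but coindex $2$, so the unstable manifold $\{x_1=0\}\cap\Sigma_0\cong\RR^2$ is connected, and this connectedness is exactly what lets $I^+(q)$ wrap around $\sigma$: in the proof of Lemma~\ref{lem:IplusInclusion} one follows a tight timelike hyperbola from $q$ close to $\sigma$, then a logarithmic spiral in the $(y_1,y_2)$-plane to cover a small ring $S_\varepsilon$, and from there exhausts $\Omega^+$. This entire maneuver lives in $\Sigma_0$, so your step-(i) trapping in $\{|x_2|<\delta(b)\}$ is real but harmless for the future of $q$. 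The index-$1$ obstruction you invoke concerns the \emph{disconnected stable manifold} $\{y_1=y_2=0\}\setminus\{0\}$ and would obstruct travel between the two components $\pm x_1>0$, not spreading into the unstable cone where $p$ sits; there is simply no barrier on the future side. What genuinely fails, and where $b$ enters, is the \emph{other} inclusion $I^-(q)\subseteq I^-(p)$, and the paper's mechanism for it is unrelated to what you propose: project to $\{y=1\}$, identify the regions $\Omega^\pm$ where the radial direction is timelike, and build a barrier through $p$ out of an arc of $\partial\Omega^+$, an interpolation curve exploiting the $1/2$-H\"older regularity of the projected cone field along $\partial\Omega^+$ (Lemma~\ref{lem:FloatAway}, the Chru\'sciel--Grant-type bubble), and a hyperbolic tail; $b$ large makes the H\"older drift strong enough to bridge the gap between the oval and the hyperbola. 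None of this appears in your proposal, and your separation-function heuristic aimed at $I^+(q)$ cannot be repaired because on that side there is nothing to separate.
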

\begin{proof}[Proof of Theorem~\ref{thm:main}]
    By Lemma~\ref{lem:IminusExclusion} $I^-(q)\nsubseteq I^-(p)$ and by Lemma~\ref{lem:IplusInclusion} $I^+(p)\subseteq I^+(q)$, so $p$ and $q$ violate reflectivity and are thus points of causal discontinuity. 
\end{proof}
\begin{corollary}
    The Borde--Sorkin conjecture is wrong.
\end{corollary}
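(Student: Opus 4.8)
The plan is to obtain the Corollary as a direct consequence of Theorem~\ref{thm:main}; the only substantive point is a short bookkeeping of the index and coindex of the critical point involved. The Borde--Sorkin conjecture is a biconditional: a Morse spacetime is causally continuous \emph{if and only if} every critical point of the underlying Morse function has index and coindex both different from $1$. To refute it, it is enough to produce one Morse spacetime for which the implication ``(index and coindex $\neq 1$ at every critical point) $\Rightarrow$ (causal continuity)'' fails, i.e.\ a spacetime satisfying the index--coindex hypothesis that is nevertheless causally discontinuous.

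First I would inspect the Morse function $f(x)=\frac12[-(x_1)^2-b(x_2)^2+(y_1)^2+(y_2)^2]$ from Theorem~\ref{thm:main}. For any $b>0$ its Hessian at the origin is $\mathrm{diag}(-1,-b,1,1)$, which is nondegenerate; hence the origin is the unique, nondegenerate critical point, with index $2$ (the directions $x_1,x_2$) and coindex $2$ (the directions $y_1,y_2$). Both numbers are different from $1$, so the spacetime of Theorem~\ref{thm:main} satisfies the hypothesis of the Borde--Sorkin conjecture; and since such a global Morse spacetime exists by the general construction recalled in Section~\ref{subsec:MorseSpacetimes}, the counterexample is not vacuous.

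Finally, Theorem~\ref{thm:main} supplies a constant $b_0>0$ such that for every $b>b_0$ this spacetime is causally discontinuous (witnessed by the explicit pair $p,q$ failing reflectivity). Thus for such $b$ we have a Morse spacetime all of whose critical points have index and coindex different from $1$, yet which is not causally continuous, contradicting the conjectured biconditional; hence the Borde--Sorkin conjecture is false. I do not expect any real obstacle here: the one thing to be careful about is the correct reading of index and coindex (and of the precise wording of the conjecture), together with the observation that the large value of $b$ places the example outside the small-anisotropy regime in which Garcia Heveling~\cite{GH} proved the conjecture, so there is no conflict with the existing positive results.
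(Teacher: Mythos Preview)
Your proposal is correct and follows exactly the paper's (implicit) argument: the corollary is recorded without a separate proof because it is immediate from Theorem~\ref{thm:main} together with the observation that the unique critical point of $f$ has index~$2$ and coindex~$2$. Your only additions are the explicit check of the Hessian and the remark about consistency with~\cite{GH}, both of which are fine and in the spirit of the paper's introduction.
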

\vspace{0.5em}
\begin{theorem}~\label{thm:B8}
    For $b=8$, the assertion of Theorem~\ref{thm:main} is also true\footnote{We do not prove that $8$ is `large enough' in the sense of Theorem~\ref{thm:main}. Nevertheless, we deem it unlikely that the optimal constant $b_0$ is larger than $8$.}.
\end{theorem}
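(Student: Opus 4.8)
I would prove Theorem~\ref{thm:B8} by following the same scheme as Theorem~\ref{thm:main}: establish, now with $b$ set equal to $8$, that $I^-(q)\nsubseteq I^-(p)$ and $I^+(p)\subseteq I^+(q)$, so that reflectivity fails at $p,q$ and the spacetime is causally discontinuous. The inclusion $I^+(p)\subseteq I^+(q)$ is in essence a statement about the slice $\{x_2=y_2=0\}$, on which $b$ does not occur, together with a monotonicity argument in the transverse directions, so I expect the proof of Lemma~\ref{lem:IplusInclusion} to survive at $b=8$ once its asymptotic estimates are replaced by the corresponding explicit numerical ones; this half should cost almost nothing extra. The real content is the non-inclusion $I^-(q)\nsubseteq I^-(p)$: the argument behind Lemma~\ref{lem:IminusExclusion} uses $b$ large and must be turned into a quantitative statement at the fixed value $b=8$ (which is presumably chosen so that the resulting algebra stays manageable).

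For the non-inclusion I would first exhibit an explicit witness: a point $r$ with $x_2\neq 0$ lying in the region $\{f<0\}$, together with an explicit piecewise-smooth curve from $q$ to $r$ that leaves the slice $\{x_2=y_2=0\}$ into the $x_2$-direction — this extra direction is precisely what is absent in the two-dimensional model of Subsection~\ref{subsec:TwoDim}. Evaluating the Morse metric $g$ built from $h$, $f$ and $\zeta=2$ along this curve and checking that each piece is past-directed timelike is then a finite, elementary computation, and for $b=8$ one should be able to pick both $r$ and the curve with entirely explicit coordinates.

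The main obstacle is the second half of the non-inclusion: showing $r\notin I^-(p)$, i.e.\ that no past-directed timelike curve from $p$ reaches $r$. This is the \emph{causal bubbling} that the abstract advertises, and I would package it through a barrier: a continuous function $\Phi$, defined on a neighborhood large enough to contain every candidate causal curve issuing from $p$ that could conceivably reach $r$, which is nondecreasing along all future-directed causal curves and satisfies $\Phi(r)>\Phi(p)$. Such a $\Phi$ would be assembled from $f$ together with a correction term whose magnitude is governed by $b$, and the requirement that $g^{-1}(d\Phi,\cdot)$ be past-pointing causal reduces to a differential inequality among the coefficients; in the $b\to\infty$ regime this inequality holds with an unbounded margin, but at $b=8$ the margin is finite, so the construction of $\Phi$ — equivalently, a sufficiently sharp comparison of the Morse light cones with those of the two-dimensional model — has to be carried out with care, and I would anticipate verifying the relevant polynomial inequalities with computer assistance. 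One then also has to confirm that the curve from $q$ to $r$ crosses this barrier with strictly positive slack and that $\Phi$ is genuinely monotone on the whole region in play; both are quantitative checks at $b=8$ rather than limits as $b\to\infty$.
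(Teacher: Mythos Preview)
Your overall scheme is right: the proof of Theorem~\ref{thm:B8} reuses the two lemmas, and Lemma~\ref{lem:IplusInclusion} indeed needs no modification --- the paper already notes it holds for all $b$ (the push-up argument in the $x_1y_1$-plane never sees $b$), so there is no ``numerical replacement'' to perform there.

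For Lemma~\ref{lem:IminusExclusion} your mechanism differs substantially from the paper's. You propose a scalar barrier \emph{function} $\Phi$ monotone along causal curves, assembled from $f$ plus a $b$-dependent correction, together with an explicit witness point $r$. The paper instead exploits the rotational symmetry in $\mathbf{y}$ and the homogeneity of $f$ to project the four-dimensional problem to a two-dimensional affine plane $\{y=1\}$, where the projected cone field $C_1^\pm$ is governed by a single polynomial inequality (Lemma~\ref{lem:CriterionC2}). In that plane the paper builds a barrier \emph{curve} in three explicit pieces --- an arc of $\partial\Omega^+$, a quadratic interpolation $x_{+-}(t)+a(t+0.1)^2$, and a hyperbola $\beta\sqrt{1+at^2}$ --- and at $b=8$ simply plugs the parameters $(\beta,a)=(0.102,6)$ into Inequality~\eqref{eq:ConfirmHyperbolicBarrier} to obtain $4.863\,t^4+0.064\,t^2+0.002>0$, and checks the interpolation piece by evaluating Inequality~\eqref{eq:Criterion3} graphically. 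This barrier curve separates the entire projected past of $p$ from the $\Omega^-$-component containing $q$, so no individual witness $r$ is needed.

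Your scalar-function approach is not wrong in principle, but it is underspecified precisely where the difficulty lies: near $\partial\Omega^+$ the projected causal structure is only $\tfrac12$-H\"older (this is the causal bubbling), and it is not clear how a smooth $\Phi$ built from $f$ and a correction would remain monotone across that set. The paper's reduction to a one-dimensional barrier curve in the two-dimensional quotient is what makes the low-regularity issue tractable --- the curve only has to have non-timelike tangent, and the H\"older behaviour is handled by the float-away Lemma~\ref{lem:FloatAway}. If you pursue your route you would effectively have to rediscover that projection, since a four-dimensional $\Phi$ with the required monotonicity is much harder to write down explicitly than a planar curve satisfying a single quadratic constraint in its slope.
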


Lemma~\ref{lem:IplusInclusion} is true for all $b$. Lemma~\ref{lem:IminusExclusion} is true for $b$ large enough (thus the constant $b_0$). The proof of Lemma~\ref{lem:IminusExclusion} involves a projection to a two dimensional subspace, where the projected causal structure has low regularity (1/2-Hölder) along the boundary of the future of the critical point. This generates causal bubbling, similar to Example 1.11 in~\cite{CG}. For $b$ small enough, this causal bubble does not disrupt causal continuity, but for $b$ large enough it manages to break causal continuity. 

\bparagraph{Possible generalizations} In Theorem~\ref{thm:main} we make a few assumptions: We assume $\zeta=2$, we assume specific values for the Morse function, and we assume that $h$ is the Euclidean metric. We only aim to construct a counter example, thus we do not pursue maximal generality. We expect that the assumptions could be relaxed, though the proof would become more difficult:

The assumption on $\zeta$ is taken for computational ease. In Lemma~\ref{lem:zetatheta} below, we show that this assumption amounts to require that the causal cones having an opening angle of $\frac \pi4$ with resepct to the background Euclidean metric. 

The assumption on the coefficients of the Morse function being $-1,-b,1,1$ are taken for geometric clarity. In the first two coefficients we mainly care about large anisotropy: $\frac {-b}{-1}$ should be large, as Garcia Heveling proved that for small anisotropy the Morse spacetime is causally continuous. The second two coefficients are chosen to be equal so that we can take a quotient by rotational symmetry. 

The assumption on $h$ can be understood as an approximation: the Morse lemma allows us to take coordinates such that $h(0)$ is Euclidean at the origin, but $h$ may have curvature. The causal structure is defined by the angle to the gradient $\nabla f$, so functions with proportional gradient fields produce the same causal structure. In particular, $f$ generates the same cone structure as $c^2 f$ for any nonzero constant $c$. We use this effect and `zoom in' by dilating the coordinates by a factor $1/c$ and by rescaling $f$ by $c^2$. Since $f$ in Morse normal form is 2-homogeneous, this zooming in leaves $f$ invariant and only changes $h$. For $c\to\infty$, the rescaled $h$ approximates its linearization, i.e., the Euclidean metric. The Morse chart in Theorem~\ref{thm:main} describes thus the linearization of more general situations. 

We expect that the first two assumptions can be omitted, though our geometric approach will require modification and computations will become more complicated. 
To us the third assumption is the most interesting one: It is not obvious to us that a causally discontinuous linearization proves causal discontinuity. Our algebraic constructive method fails here and more general methods are required. Thus, we would be very interested in a proof of causal discontinuity for non-flat $h$.

\subsection{Outline of the paper} In Section~\ref{sec:definitions} we introduce the neccessary notions: In Subsection~\ref{subsec:MorseSpacetimes} Morse spacetimes and in Subsection~\ref{subsec:CausalContinuity} causal continuity. In Subsection~\ref{subsec:TwoDim} we also discuss the two dimensional situation that led Garcia Heveling to the conjecture that the Borde-Sorkin conjecture may be wrong.

In Section~\ref{sec:proof} we begin by outlining the geometric setup. In Subsection~\ref{subsec:PoinsOfDiscontinuity} we state the two Lemmas~\ref{lem:IplusInclusion} and~\ref{lem:IminusExclusion} that together prove Theorem~\ref{thm:main}. We directly prove Lemma~\ref{lem:IplusInclusion} and we outline the proof of Lemma~\ref{lem:IminusExclusion}. The rest of the section is then concerned with the proof of this second lemma. 

In Section~\ref{sec:B8} we prove Theorem~\ref{thm:B8} by showing that the construction of the previous section also works for $b=8$.

The Appendix~\ref{sec:appendix} contains a proof that gradient flow lines are length maximising geodesics. This fact is not used in the other results, but we have included the proof since we have not found it elsewhere and since it may be of independent interest.

\subsection*{Acknowledgements} The first author was funded by the Deutsche Forschungsgemeinschaft (DFG, German Research Foundation) – Project-ID 281071066 – TRR 191. The second author is supported in part by the National Natural Science Foundation of China (Grant No. 12171096, 12371186). We want to thank Stefan Suhr for making us aware of the project and  Leonardo Garcia Heveling for the discussion of the problem.

\section{Preliminaries}\label{sec:definitions}

\subsection{Morse Spacetimes}\label{subsec:MorseSpacetimes} A Morse spacetime $(M^{n+m},h,f,\zeta)$ is constructed from the following data: a compact, connected $(n+m)$-dimensional smooth Riemannian manifold $(M^{n+m},h)$, possibly with boundary, a Morse function $f$ and a real number $\zeta>1$. Recall that a smooth function $f:M\rightarrow\RR$ is said to be Morse if the zeros of the section $df$ of $T^*M$ (i.e., the critical points) are transversely cut out. This is equivalent to saying that the Hessian of $f$ is non-degenerate at critical points. It follows from the Morse lemma \cite[Theorem 1.3.1]{AD} that near each critical point $p$, one can find a coordinate system $\{z_i\}_{1\leq i\leq n+m}$ such that
\begin{equation}
f(\mathbf z)=f(p)+\frac{1}{2}\sum^{n}_{i=0}a_i z_i^2,
\end{equation}
where $\mathbf z=(z_1,...,z_{n+m})$ and $a_i$ are nonzero constants. In particular, Morse functions only have isolated critical points. We assume that there are no critical points on the boundary $\partial M$. Since all our considerations are local around critical points we will allow $M=\RR^{n+m}$ with the assumption that all data are pullbacks along a smooth embedding into a compact manifold.  

\begin{definition}\label{def:g}
    We define the \emph{Morse--Lorentzian metric} on $M$
$$g(\mathbf z)= \|df(\mathbf z)\|^2 h(\mathbf z) - \zeta df(\mathbf z)^{\otimes 2}.$$
\end{definition}

This is clearly a symmetric bilinear form. If $df(\mathbf z)=0$, then $g(\mathbf z)=0$. For any other point $\mathbf z$ the tangent space splits into gradient and tangent space of the level set $T_{\mathbf z}M=\nabla f(\mathbf z)\RR \times\ker df(\mathbf z)$. The requirement $\zeta>1$ ensures that $g(\mathbf z)(\nabla f(\mathbf z),\nabla f(\mathbf z))<0$, and on the level sets $g(\mathbf z)|_{\{f\equiv c\}} = \|df(\mathbf z)\|^2 h(\mathbf z)$ is clearly positive definite, so $g$ is indeed a Lorentz metric away from the critical points of $f$. 

\begin{remark}
    This construction is more general than it may seem on first inspection: One may also start with $g$ and a function $f$ such that
    \begin{itemize}
    \item $g(\mathbf z)=0$ iff $df(\mathbf z)=0$, the zeroes of $df$ are transverse and $\|g\|$ is quadratic around the zeroes,
    \item $g$ is Lorentzian away from the critical points of $f$,
    \item $\ker df$ is spacelike.
    \end{itemize}
    Then since $M$ is compact, there is a number $\zeta$ large enough such that $h=\frac 1{\|df\|^2}(g+\zeta df^{\otimes 2})$ is a Riemannian metric that continuously extends over the critical points and $(M,h,f,\zeta)$ is Morse--Lorentz with Morse--Lorentz metric $g$.
\end{remark}

We will use the level sets of $f$ as spatial slices. They change topology at critical points according to surgery theory: passing over a critical point of index $n$ amounts to attaching a $(n,m)$-handle to the slice.

We specialize to the case of a Morse chart $(\mathbb{R}^{n+m},h)$ where $h$ is Euclidean
\[
h=\sum^{n+m}_{i=1}dz_i^2
\]
and the Morse function $ f:\mathbb{R}^{n+m}\rightarrow\mathbb{R}$ is normalized to have critical value 0 and has the standard form
\begin{equation}\label{def:f}
f(\mathbf z)=\frac{1}{2}\sum_{i=1}^{n+m}a_i z_i^2.
\end{equation}
W.l.o.g, we assume $n_s=n$ of the $a_i$ are negative and $n_u=m$ of the $a_i$ are positive (and none are 0 because of non-degeneracy). By relabeling we achieve
\[
a_1\leq \ldots\leq a_{n}<0<a_{n+1}\leq\ldots \leq a_{n+m}.
\]
By denoting $A=\operatorname{diag}(a_1,\ldots,a_{n+m})$ we can also write 
\[
f(\mathbf z)=\frac12 \mathbf z^\intercal A\mathbf z, \quad \nabla f(\mathbf z)= A\mathbf z.
\]
For the positive gradient flow of $f$: $0$ is the only fixed point, the subspace $\RR^{n}\times\{0\}$ is the stable manifold, where we denote its coordinates by $\mathbf x=(x_1,x_2,...,x_n)$ and its orthogonal complement $\{0\}\times\RR^{m}$ is the unstable manifold, where the coordinate is denote by $\mathbf y=(y_1,...,y_m)$, thus $\mathbf z=(\mathbf x,\mathbf y)$ and every line not converging to the origin for positive or negative time is a hyperbola.
\begin{remark}\label{rem:switchConvention}
There is an unfortunate conflict of conventions: In Morse theory usually one flows down and thus perceives the negative gradient flow of $f$ as the natural direction. In Lorentzian geometry, $f$ is time, which naturally increases, so the positive gradient flow is the natural direction. Here, we follow the Lorentz convention: the indices $u$ and $s$ stand for unstable and stable with the the positive gradient flow in mind. 
\end{remark}

The lightlike vectors based at any point form a cone with axis $\nabla f$ and constant angle (measured by $h$): 
\begin{lemma}\label{lem:zetatheta}
If $g(v,v)=0$, then $\cos(\angle(v,\nabla f))=\pm\zeta^{-\frac12}$. We denote the two possibilities by $\angle(v,\nabla f)=\theta$ and $\angle(v,\nabla f)=\pi-\theta$, where $\theta\in(0,\frac \pi 2)$. If $\zeta = 2,$ then $\theta=\frac\pi4$.
\end{lemma}
\begin{proof}
    Notice that 
    \begin{align*}
        0=g(v,v)=\langle \nabla f,\nabla f\rangle\langle v,v\rangle-\zeta \langle \nabla f,v\rangle^2,
    \end{align*}
which implies 
\[
\frac{\langle v, \nabla f\rangle^2}{|v|^2|\nabla f|^2} = \zeta^{-1}.
\] 
Taking the square root then results in 
\[
\cos(\angle(v,\nabla f))=\frac{\langle v, \nabla f\rangle}{|v||\nabla f|} = \pm\zeta^{-\frac12}.
\] 
\end{proof}

The spacelike vectors are thus the vectors $v$ with $\angle(\nabla f,v)\in(\theta, \pi-\theta)$. Such a Morse--Lorentz spacetime is naturally time oriented: The timelike cone splits into the two components of vectors $v$ with $\angle(\nabla f, v)<\theta$, which we choose for our positive orientation and $\angle(\nabla f, v)>\pi-\theta$, which then is the negative orientation. We denote these cones by
$$C^+(\mathbf z)=\{v\in T_\mathbf z\RR^n\mid \angle(v,\nabla f) <\theta\},\quad C^-(\mathbf z)=\{v\in T_\mathbf z\RR^n\mid \angle(v,\nabla f)>\pi-\theta\}.$$ 
Note that at the critical point $g_0=0$, thus every vector is lightlike. In particular $0$ admits no time-like vector. 

A positive (negative) time-like curve $\gamma$ is a regular curve not crossing zero such that $\angle(\nabla f (\gamma(t)), \dot\gamma(t))< \theta$ (resp.~$>\pi-\theta$) for all $t$. If there is a positive (negative) time-like curve from $q$ to $p$, then we say that $p$ lies in the future (past) of $q$.
\begin{definition}\label{def:FuturePast}
    The \emph{future of $q$} $I^+(q)$ (and the past $I^-(q)$) is the set of points that lie in the future (past) of $q$. 
\end{definition}
\begin{remark}
    There is also the notion of causal future and causal past, usually denoted by $J^\pm$, based on the existence of non-spacelike curves connecting the points. It shall be noted - though the set of non-spacelike vectors is the closure of the set of timelike vectors - that $J^\pm$ is not generally closed, and that even if it is closed the closure of $I^\pm$ is not generally $J^\pm$. We will focus on $I^\pm$.
\end{remark}

\subsection{Causal continuity}\label{subsec:CausalContinuity}

We begin by notice that the definition of $I^\pm$ and $J^\pm$ of a point gives rise to the chronological and the causal relation. Many `good' properties that a Lorentz manifold can have can be reformulated as causality properties, i.e., properties of these relations. In classical Lorentz geometry there is a hierarchy, see~\cite{MS} of causality properties, where the higher ones imply the lower ones.
\begin{itemize}
    \item Globally hyperbolic
    \item Causally simple
    \item Causally continuous
    \item Stably causal
    \item Strongly causal
    \item Distinguishing
    \item Causal
    \item Chronological
    \item Non-totally vicious
\end{itemize}
For a comprehensive introduction to such a causal ladder in  Lorentzian geometry, we refer the readers to the stanard textbook~\cite{BEE} or the recent survey paper~\cite{M}. Recently this hierarchy has also been established for Lorentz length spaces (see~\cite{APS}), which is a modern setup that has proved to be effective at dealing with low regularity Lorentz manifolds. Since Morse--Lorentz manifolds are not Lorentz, the causality properties must be generalized (see \cite{BDGSS}). The generalized causality properties then do not necessarily satisfy the hierarchy, and in fact Theorem~\ref{thm:main} gives an example when it is not the case: it gives an example that is (generalized) globally hyperbolic but not (generalized) causally continuous.

For Morse--Lorentz manifolds only the top of the hierarchy is interesting, especially global hyperbolicity, causal continuity and stable causality. We will use the following classical definitions, as they admit natural generalizations described below.
\begin{itemize}
    \item \emph{Classical stable causality} is equivalent to the existence of a time function~$f$. 

    Note that this means that the manifold is foliated by spacelike hypersurfaces (the level sets of the time function) through which timelike curves can only pass transversely in one direction.
    \item \emph{Classical causal continuity} means that the sets $I^\pm(q)$ depend outer continuously on the point $q$, see Definition~\ref{def:CausalContinuity}. 
    \item \emph{Classical global hyperbolicity} is equivalent to saying that there is a time function $f$ such that every inextendible causal curve passes through all times, and $\overline I^\pm(q)\cap f^{-1}(t)$ is compact for all $q,t$. 
\end{itemize}

These properties are generalized as follows.

\vspace{1em}
\emph{Morse--Lorentz stable causality:} A Morse--Lorentz spacetime has a time function $f$. Thus, Morse--Lorentz manifolds satisfy a reasonable generalization of stable causality by default. 

\vspace{1em}
\emph{Morse--Lorentz causal continuity:} A timelike curve that converges to a critical point can not be timelikely extended (since the critical point does not admit a timelike vector), which means that $I^\pm(p)=\emptyset$ if $p$ is a critical point. To circumvent the problems arising from this, one imitates the machinery of broken gradient-like flow lines from Morse theory: we allow broken curves, which are pieces of curves where each piece that ends at a critical point is followed by a next piece that starts at this critical point. From now on we denote by $I^\pm(p)$ the broken future/past of $p$. Note that this way limits of timelike curves are broken non-spacelike curves, and broken timelike curves are always limits of timelike curves. Our interest is whether this correspondence can be completed: Our main Lemmas~\ref{lem:IminusExclusion} and~\ref{lem:IplusInclusion} basically show that a broken non-spacelike curve is a limit of timelike curves from one side, but not from the other. 

The following definitions (which are all also equivalent definitions of causal continuity in the case of Lorentz manifolds) are equivalent, see \cite{BDGSS}.

\begin{definition}\label{def:CausalContinuity}
    A Morse--Lorentz manifold is causally continuous if it is distinguishing (i.e., if $I^+(p)=I^+(q)$ or $I^-(p)=I^-(q)$ then $p=q$) and if one of the following equivalent conditions holds:
    \begin{itemize}
        \item The timelike pasts and futures are outer continuous: If a compact set $K$ does not intersect $\overline{I^\pm(q)}$, then there is an open neighborhood $U$ of $q$ such that $\forall p\in U: \; \overline{I^\pm (p)}\cap K = \emptyset$.
        \item Approximation: The future (past) of a point is the collective future (past) of its past (future): $I^+(q) = \uparrow I^-(q)$ and same statement for -.
        \item Reflecting: $I^+(p)\subseteq I^+(q)\Leftrightarrow I^-(q)\subseteq I^-(p)$.   
    \end{itemize}    
\end{definition}

We will use the third property to show causal discontinuity: we look for two points $p,q$ such that $I^+(p)\subseteq I^+(q)$ but $I^-(q)\nsubseteq I^-(p)$.

\vspace{1em}
\emph{Morse--Lorentz global hyperbolicity:} For global hyperbolicity we have the same problem of timelike curves ending at critical points, and we also solve it by incorporating piecewise timelike curves. Then, the important remaining property of global hyperbolicity is $\overline I^\pm(q)\cap f^{-1}(t)$ is compact for all $q,t$. This is a very natural assumption in Morse theory, often realized by demanding that the level sets be compact. It is also true for the standard Morse--Lorentz chart that we investigate here. 


\subsection{Two dimensional situation}\label{subsec:TwoDim}
We quickly reproduce Example 1 from~\cite[Section 3.2]{GH} which led Garcia Heveling to the the conjecture that the Borde--Sorkin conjecture is wrong for large anisotropy.

We consider the local chart $\RR^2$ equipped with the Euclidean metric, we denote $\mathbf x =(x_1,x_2)$, and as Morse--Lorentz function we set  $$f(\mathbf x) = -\frac12(x_1^2+bx_2^2).$$ 
This is a good description for all standard Morse--Lorentz functions since $f$  induces the same cone structure as $\lambda f$ for $\lambda>0$ (and it flips the time orientation for $\lambda<0$).

If $b<0$, then the index (and coindex) is 1. This is the local model around the critical point of the ``trousers spacetime'' that was studied already in\cite{AdW} and causal discontinuity essentially stems from the fact that the sublevel set $\{f<0\}$ is disconnected.

For us, the interesting case is $b>0$. By swapping coordinates and renormalizing we assume $b\geq 1$. In this example, time has a global maximum of 0 and the spacetime can be seen as a parabolic hat\footnote{In some sources, this is referred to as the yarmulke spacetime.}. Then, $\nabla f(\mathbf x)=-\begin{pmatrix}1&0\\0&b\end{pmatrix}\mathbf x$. The reason that the two dimensional situation is so easy to understand is that the set of timelike positively oriented vectors lie in a cone whose boundary consists of the two rays $\RR_{\geq0}A_\pm\mathbf x$ spanned by the gradient vector tilted by the opening angle to the left or right $A_\pm\mathbf x$, where $$A_\pm=-\begin{pmatrix}\cos\pm\theta&-\sin\pm\theta\\\sin\pm\theta&\cos\pm\theta\end{pmatrix}\begin{pmatrix}1&0\\0&b\end{pmatrix}.$$ 
Thus, to understand timelike curves it suffices to understand two matrices and `anything in between' is timelike.
The characteristic polynomial of the matrix $A_\pm$ is $b-\lambda\cos\theta (b+1)+\lambda^2$ with discriminant $\cos^2\theta (b+1)-4b$. The discriminant is non-negative if
$$\cos\theta\geq \frac{GM(1,b)}{AM(1,b)}.$$ 
The right hand side is always $\leq 1$ because of the $AM-GM$ inequality, and equality only holds for $b=1$. In that equality case, the vector fields $A_\pm\mathbf x$ integrate to logarithmic spirals. For every angle $\theta$ there is $b$ large enough such that the inequality is strictly satisfied and thus there are (two different) real eigenvalues and consequently there are two different real eigenspaces for each of the two Matrices $A_\pm$. These are breaking points: on one side of the Eigenspaces positive timelike vectors can cross radial lines, on the other side they can not. Thus, the two pairs of Eigenspaces cut the plane in regions as depicted in Figure~\ref{fig:2DCones}, where positive timelike curves can only exit the light blue regions and only enter the red regions. In particular, it is not possible to travel in a lightlike manner from one red region to the other. This means that the past of the critical point splits into different regions with restricted manouvrability like in the trousers spacetime, though the separation is much less clear (here, the separation is dynamical and not topological).

\begin{figure}
    \centering
    \includegraphics[width=0.5\linewidth]{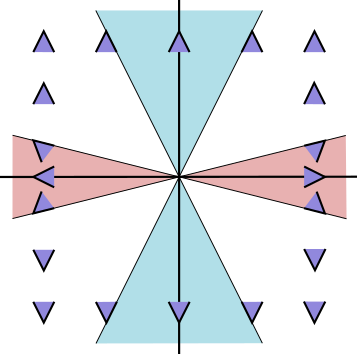}
    \caption{The dark blue triangles indicate the positive timelike cones at selected points. For large $b$, the gradient vector field is almost vertical away from a small neighborhood of the $x_1$-axis. Positive lightlike curves can only exit the light blue region and only enter the red region.}
    \label{fig:2DCones}
\end{figure}

Garcia Heveling conjectured for a four dimensional example that admits a two dimensional slice that looks like the example above that this lack of manouvrability in the past of the two dimensional slice can be utilized to prove causal discontinuity in the entire four dimensional spacetime. For more details on his conjecture, see Section 3.2 in~\cite{GH}. The difficulty is to extend the separating eigenspaces to separating hypersurfaces in four dimensional space. This is the main step in the proof of Theorem~\ref{thm:main}.

\section{Proof of Causal discontinuity} \label{sec:proof}

In Subsection~\ref{subsec:PlaneProjection}, we will first understand the four dimensional problem better by projecting to an affine plane. Then we establish some geometric facts of this projection in Subsection~\ref{subsec:DefineOmega}. In Subsection~\ref{subsec:PoinsOfDiscontinuity} we identify two points $q,p$ that violate causal continuity. This is confirmed in two parts: $I^+(p)\subseteq I^+(q)$ (Lemma~\ref{lem:IplusInclusion}) but $I^-(q)\nsubseteq I^-(p)$ (Lemma~\ref{lem:IminusExclusion}). The first lemma is quickly proved by a push-up proof. The second lemma requires more effort: essentially it is done by finding a barrier, which we do the remainder of this section. 

\subsection{Projection to the affine plane}\label{subsec:PlaneProjection} We project the four dimensional space to a plane in two steps: first we divide out the angular coordinate of $\mathbf{y}=(y_1,y_2)$, thus reducing the situation to $\RR^2\times\RR_{>0}$; then we will project radially to the affine plane $y=1$. The causal cones on the projections will be the the projections of the causal cones. In this way, every causal curve in the original cone fields is projected to a causal curve in the projected cone fields.

\bparagraph{Reducing $\mathbf y$ to its radial coordinate} 
We denote the coordinates of the second $\RR^2$-factor as $\mathbf{y}=(y_1,y_2)$ and its polar coordinates as $(y= |\mathbf{y}|, \varphi_y)$.
We denote the rotation in the $\mathbf{y}$-coordinate by $\rot_\varphi:\RR^2\to\RR^2;\,\,\mathbf{y}=(y_1, y_2)\mapsto\rot_\varphi(\mathbf{y})=(y_1\cos\varphi-y_2\sin\varphi, y_1\sin\varphi+y_2\cos\varphi)$. It is easily seen from the formula \eqref{ex-mf} that $f$ is independent of $\varphi_y$, thus the cone fields $C^\pm$ admit rotational symmetry: a vector $(\mathbf v,\mathbf w)$ at $(\mathbf x,\mathbf y)$ is timelike if and only if the rotated vector $(\mathbf v,D\rot_\varphi \mathbf w)$ at the rotated point $(\mathbf x,\rot_\varphi \mathbf y)$ is timelike. 

\vspace{1em}
We omit points with $\mathbf y=0$ and consider the projection onto the radial coordinate
$$\varrho\colon \RR^4\setminus(\RR^2\times\{0\})\to \RR^2\times \RR_{>0};\quad (\mathbf x,\mathbf y)\mapsto (\mathbf x,y= |\mathbf{y}|)$$
which has an (Euclidean) isometric right inverse $$\iota\colon\RR^2\times \RR_{>0}\to\RR^4\setminus(\RR^2\times\{0\});\quad (\mathbf x,y)\mapsto (\mathbf x,y,0).$$
Because of the rotational symmetry of $f$, it descends to the quotient $\RR^2\times \RR_{>0}$ to the restricted function $\iota^{\ast}f$ and we define the cone fields of $\pm$-oriented timelike vectors as
\begin{align*}
C^+_\varrho(\mathbf x,y)&:=\{v\mid \angle(v,\nabla \iota^{\ast}f) <\theta\},\\
C^-_\varrho(\mathbf x,y)&:=\{v\mid \angle(v,\nabla \iota^*f)>\pi-\theta\}.
\end{align*}
Note that the gradient of $f$ at $\iota(\mathbf x,y)$ is tangent to $\iota(\RR^2\times \RR_{>0})$ and coincides with $D\iota(\mathbf x,y)[ \nabla \iota^*f]$. Since $\iota$ is an isometric inclusion, it is in particular angle preserving. Thus the cones in the projection coincide with the restricted cones:
$$C^\pm (\iota(\mathbf x,y))\cap T_{\iota(\mathbf x,y)}\iota(\RR^2\times\RR_{>0})= D\iota(\mathbf x,y)[C^\pm_\varrho(\mathbf x,y)]. $$
Since $\iota$ is a right inverse of $\varrho$, the cone fields in the projection are precisely 
$$D\varrho\,\,C^\pm(\mathbf x,\mathbf y) = C^\pm_\varrho(\mathbf x,y).$$

Thus, we collect characterizations of vectors $v\in T_p(\RR^2\times\RR_{>0})$. Because of the rotational symmetry of $C^\pm$ it does not matter which preimage point of $p$ we choose.
\begin{itemize}
    \item $v$ is $C_\varrho^\pm$-timelike if there exists a $C^\pm$-timelike preimage vector $D\varrho^{-1}(p)[v]$. Note that not every preimage vector is $C^\pm$-timelike.
    \item $v$ is $C_\varrho^\pm$-lightlike if there does not exist a $C^\pm$-timelike preimage, but there is a $C^\pm$-lightlike preimage. Note that then this $C^\pm$-lightlike preimage is unique (at every preimage point in $\varrho^{-1}(p)$).
    \item $v$ is otherwise $C_\varrho^\pm$-spacelike. Then, the entire preimage $D\varrho^{-1}(p)[v]$ consists of $C^\pm$-spacelike vectors.
\end{itemize}   
For us it is crucial that the preimage of non-timelike vectors consists entirely of non-timelike vectors. This makes the notion of barrier vector below~\ref{def:barrier} meaningful.

The points in $\RR^4$ with $\mathbf y=0$ can be continuously included in the picture by setting $\varrho(\mathbf x,0,0)=(\mathbf x, 0)$, though the differential $D\varrho$ at these points makes no sense. This does not matter much to us, in the next step we will push these points to the projective boundary.

\begin{remark}
Please note that any Morse spacetime with a critical point in $\RR^3$ with index 1 or 2 is automatically causally discontinuous, see~\cite{DGS}. As outlined in the introduction, the essential reason for this is that the attaching (or belt) 0-sphere of the handle is disconnected. However, this does not apply to our situation since the remaining $y$-factor of the space is $\RR_{\geq 0}$, in which we only have half of a 0-sphere, which is indeed connected. 
\end{remark}

\bparagraph{Forgetting homogeneous dilation}
Since both $f$ and $\iota^*f$ are homogeneous polynomials of degree $2$, the cone fields $C^\pm$ as well as $C^\pm_\varrho$ are invariant under homogeneous dilation. Thus, we project further radially from $\RR^2\times\RR_{>0}$ to the $y=1$-plane $\RR^2\equiv \RR^2\times\{1\}$ as
$$\pi(\mathbf x,y)=\frac{\mathbf x}y.$$

\begin{remark} This projection extends continuously to $\RR^2\times\RR_{\geq 0}\setminus \{0\}$ if we extend the target space by adding the projective circle $S^1_\infty$ at infinity and mapping $(\mathbf x,0)$ to $\frac{ \mathbf x}{|\mathbf x|}\in S^1_\infty$. The circle at infinity represents the positive projectivisation of the two dimensional linear situation described in Subsection~\ref{subsec:TwoDim}. This is good to keep in mind, but in the following we do not include the circle at infinity in the description to alleviate notation and since our construction works away from $y=0$. We also notice that this map does \emph{not} extend continuously to $0$. 
\end{remark}

We also intend to project the cones $C_\varrho^\pm$ to the affine plane 
$$
C_1^\pm:=D\pi C_\varrho^\pm.
$$
Recall that $\nabla f(\mathbf x,y)=(-x_1,-bx_2,y)$ and denote the radial vector field by $\rho(\mathbf x, y) = (\mathbf x, y)$. To describe $C_1^\pm$ we regard the projected gradient, which is 
$$D\pi\nabla f(\mathbf x,y) = \frac 1y (\nabla f - \rho) = \frac1y(-2x_1,-(b+1)x_2)$$
since the differential is 
$$D\pi = \begin{pmatrix}
    \frac 1y & 0 & -\frac{x_1}{y^2}\\
    0&\frac 1y  & -\frac{x_2}{y^2}
\end{pmatrix}.$$
By definition, a vector is $C_1^\pm$-timelike if there exists a positive/negative $C^\pm_\varrho$-timelike preimage. It is $C_1^\pm$-lightlike if there does not exist a $C^\pm_\varrho$-timelike preimage, but there is a $C^\pm_\varrho$-lightlike preimage. Otherwise, the vector is $C_1^\pm$-spacelike and all its preimages are $C^\pm_\varrho$-spacelike.

\vspace{1em}
Note that $\ker D\pi(\mathbf x,y) = \RR\cdot \rho$, thus the preimage of a vector $v$ is $D\pi^{-1}v = v+\RR\cdot\rho$. Also, since $C_{\varrho}^\pm$ is a cone, $D\pi^{-1}v$ intersects $C_\varrho^\pm$ (or is tangent to it) iff $D\pi^{-1}\lambda v=\lambda D\pi^{-1} v$ intersects $ C_\varrho^\pm$  (or is tangent to it) for any $\lambda >0$. In other words, the projection of the cone is still a cone and we must investigate whether the plane spanned by $v$ and $\rho$ intersects $ C_\varrho^\pm$ (or is tangent to it). Since $ C_\varrho^\pm$ is determined by the angle to $\nabla f$, we are interested in the angle between the normal $v\times\rho$ of the plane and $\nabla f$: If the angle is closer than $\pi/2-\theta$ to either $\nabla f$ or $-\nabla f$, then vectors from the plane have angle at least angle $\theta$ from both $\nabla f$ and $-\nabla f$, which means that every vector of the plane is $C_\varrho^\pm$-spacelike. Combined with the other inequalities we obtain:

\begin{lemma}\label{lem:CriterionC1}
    Let $0\neq v\in T\RR^2\times \{1\}$. Then, $v$ is, with respect to $C_1^\pm= D\pi C_\varrho^\pm$,
    \begin{itemize}
        \item spacelike if $\angle(v\times\rho,\nabla f) \in[0,\pi/2-\theta)\cup (\pi/2 +\theta,\pi]$,
        \item lightlike if $\angle(v\times\rho,\nabla f) \in\{\pi/2-\theta,\pi/2+\theta\}$,
        \item timelike if $\angle(v\times\rho,\nabla f) \in(\pi/2-\theta,\pi/2+\theta)$.
    \end{itemize}
\end{lemma}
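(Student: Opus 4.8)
The plan is to reduce the classification of vectors in the affine plane to an angle computation involving the normal $v\times\rho$ of the plane spanned by $v$ and $\rho$. By the discussion preceding the statement, a vector $v\in T\RR^2\times\{1\}$ is $C_1^\pm$-timelike (resp.\ lightlike, spacelike) precisely when the two-dimensional plane $P:=\mathrm{span}(v,\rho)\subset T_{(\mathbf x,y)}(\RR^2\times\RR_{>0})$ meets the interior of the cone $C_\varrho^\pm$ (resp.\ is tangent to its boundary, resp.\ misses it entirely), since $C_\varrho^\pm$ is itself a cone and $D\pi^{-1}v=v+\RR\cdot\rho$ is a line in the plane $P$. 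So the whole statement follows once I know exactly when a plane through the origin of $T_{(\mathbf x,y)}(\RR^2\times\RR_{>0})$ intersects the round cone $C_\varrho^\pm$ of half-angle $\theta$ about the axis $\nabla \iota^*f$.

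The key step is therefore the following purely Euclidean fact about a round cone in $\RR^3$: a plane $P$ with unit normal $N$ intersects the open cone $\{w:\angle(w,a)<\theta\}$ if and only if $\angle(N,a)\in(\pi/2-\theta,\pi/2+\theta)$, is tangent to its boundary iff $\angle(N,a)\in\{\pi/2-\theta,\pi/2+\theta\}$, and misses the closed cone iff $\angle(N,a)\in[0,\pi/2-\theta)\cup(\pi/2+\theta,\pi]$. To see this, write $a=a_{\parallel}+a_{\perp}$ with $a_{\parallel}\in P$ and $a_{\perp}\perp P$; then the smallest angle between $a$ and any vector of $P$ is realized by $a_{\parallel}$ and equals $\angle(a,a_{\parallel})=\pi/2-\angle(a,N)$ if $\angle(a,N)\le\pi/2$, and $\angle(a,N)-\pi/2$ otherwise — in both cases it is $\bigl|\pi/2-\angle(a,N)\bigr|$. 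Since $C_\varrho^\pm$ consists of the vectors within angle $\theta$ of $\nabla \iota^*f$ \emph{or} within angle $\theta$ of $-\nabla \iota^*f$, $P$ meets the open double cone iff $\bigl|\pi/2-\angle(a,N)\bigr|<\theta$, i.e.\ $\angle(N,a)\in(\pi/2-\theta,\pi/2+\theta)$; the boundary and exterior cases are the corresponding equalities and the complementary interval. Here $a=\nabla \iota^*f$, and replacing $\nabla \iota^*f$ by $\nabla f$ (computed at $\iota(\mathbf x,y)$) is harmless since $D\iota$ is an isometric inclusion carrying $\nabla\iota^*f$ to the tangential gradient $\nabla f$, and angles of $v\times\rho$ against $\nabla f$ are the same as against its normalization.

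Two bookkeeping points round out the argument. First, one must note that $v\times\rho$ is a genuine (nonzero) normal to $P$: this holds exactly when $v$ and $\rho$ are linearly independent; if instead $v\parallel\rho$ then $v\in\ker D\pi$ would force $D\pi v=0$, contradicting $0\ne v\in T\RR^2\times\{1\}$ being the image under $D\pi$ — more carefully, $v$ lives downstairs and $\rho=\rho(\mathbf x,y)$ upstairs, and the relevant plane is $\mathrm{span}\bigl(\tilde v,\rho\bigr)$ for any lift $\tilde v$ of $v$; distinct lifts differ by multiples of $\rho$, so the plane is well defined and two-dimensional, hence $\tilde v\times\rho\ne 0$, and its direction is independent of the lift. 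Second, the strict/nonstrict dichotomy between "open cone" and "closed cone" matches the earlier trichotomy for $C_\varrho^\pm$-timelike/lightlike/spacelike preimages verbatim, so no separate case analysis of degenerate tangencies is needed.

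The only real subtlety — the part I would be most careful about — is the handling of the \emph{double} cone (both $C^\pm_\varrho$ near $\nabla f$ and near $-\nabla f$) and making sure the union over the two sheets produces the single symmetric interval $(\pi/2-\theta,\pi/2+\theta)$ rather than two disjoint pieces; this is exactly why the criterion is phrased in terms of $\angle(v\times\rho,\nabla f)$ alone, with no sign ambiguity, and it is the place where the absolute value $\bigl|\pi/2-\angle(\nabla f,v\times\rho)\bigr|$ does the work. Everything else is the elementary projection-to-a-plane computation above together with the already-established fact that $C_\varrho^\pm$ is a cone determined by the angle to $\nabla f$.
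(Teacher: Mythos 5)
Your proposal is correct and takes essentially the same approach as the paper's own proof: reduce the classification of $v$ under $C_1^\pm$ to whether the plane $\operatorname{span}(\tilde v,\rho)$ through the origin intersects, is tangent to, or misses the round cone $C_\varrho^\pm$ of half-angle $\theta$ about the axis $\nabla f$, and then phrase this in terms of the angle between the plane's normal $v\times\rho$ and $\nabla f$. The paper states the underlying Euclidean criterion without proof (``if the angle is closer than $\pi/2-\theta$ to $\pm\nabla f$, then every vector of the plane has angle at least $\theta$ from both''); your orthogonal decomposition $a=a_\parallel+a_\perp$, yielding minimum angle $|\pi/2-\angle(N,a)|$ between axis and plane, simply fills in that omitted detail.
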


If $\theta =\pi/4$, then the condition to be spacelike  is
$$\angle(v\times\rho,\nabla f) < \pi/4 \mbox{ or } >3\pi/4$$
if we take the cosines, this becomes
$$\frac{\langle v\times\rho, \nabla f \rangle }{|v\times\rho||\nabla f|} > 1/\sqrt 2 \mbox{ or } < -1/\sqrt 2.$$
By taking squares, we consolidate the two cases
$$\left(\frac{ \langle v\times\rho, \nabla f \rangle }{|v\times\rho||\nabla f|}\right)^2>\frac 1{2}.$$
In coordinates, this is the relation 
$$\frac{ (-v_2 x_1 + bv_1x_2 + (v_1x_2 - v_2x_1))^2 }{(v_1^2 +v_2^2 + (v_1x_2-v_2x_1)^2)(x_1^2+b^2x_2^2+1)}>\frac 1{2},$$
i.e., 
$$ 2(-2v_2 x_1 + (b+1)v_1x_2))^2 >{(v_1^2 +v_2^2 + (v_1x_2-v_2x_1)^2)(x_1^2+b^2x_2^2+1)}.$$
Thus (and with anologous considerations for timelike and lightlike) we have proved 
\begin{lemma}\label{lem:CriterionC2}
    Let $\theta=\pi/4$. Let $0\neq v\in T\RR^2\times \{1\}$. Then, $v$ is, with respect to $C_1^\pm= D\pi C_\varrho^\pm$,
    \begin{itemize}
        \item spacelike if \\$2(-2v_2 x_1 + (b+1)v_1x_2))^2 >{(v_1^2 +v_2^2 + (v_1x_2-v_2x_1)^2)(x_1^2+b^2x_2^2+1)}$,
        \item lightlike if \\$2(-2v_2 x_1 + (b+1)v_1x_2))^2 ={(v_1^2 +v_2^2 + (v_1x_2-v_2x_1)^2)(x_1^2+b^2x_2^2+1)}$,
        \item timelike if \\$2(-2v_2 x_1 + (b+1)v_1x_2))^2 <{(v_1^2 +v_2^2 + (v_1x_2-v_2x_1)^2)(x_1^2+b^2x_2^2+1)}$.
    \end{itemize}
\end{lemma}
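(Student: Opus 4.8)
The plan is to deduce Lemma~\ref{lem:CriterionC2} from Lemma~\ref{lem:CriterionC1} by specializing to $\theta=\pi/4$ and rewriting the angle criterion in the standard coordinates of the affine plane $\RR^2\equiv\RR^2\times\{1\}$. The one point that needs (a little) care is that the passage from a condition on $\cos\angle(v\times\rho,\nabla f)$ to a condition on its square should be an equivalence, so that the three cases partition the tangent space and, in particular, the lightlike vectors are picked out exactly by the equality.

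First I would fix a nonzero $v\in T(\RR^2\times\{1\})$ based at $(\mathbf x,1)$ and set $\alpha:=\angle(v\times\rho,\nabla f)$. By Lemma~\ref{lem:CriterionC1} with $\theta=\pi/4$, the vector $v$ is spacelike iff $\alpha\in[0,\pi/4)\cup(3\pi/4,\pi]$, lightlike iff $\alpha\in\{\pi/4,3\pi/4\}$, and timelike iff $\alpha\in(\pi/4,3\pi/4)$. Since $\cos$ is a strictly decreasing bijection $[0,\pi]\to[-1,1]$ and $t\mapsto t^{2}$ is strictly increasing on $[0,1]$, these conditions are equivalent to $\cos^{2}\alpha>\tfrac12$, $\cos^{2}\alpha=\tfrac12$, $\cos^{2}\alpha<\tfrac12$ respectively; squaring is precisely what merges the two half-cones $\cos\alpha>\tfrac1{\sqrt2}$ and $\cos\alpha<-\tfrac1{\sqrt2}$ into a single inequality, because $\cos^{2}\alpha$ only sees $|\cos\alpha|$.

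It then remains to expand $\cos^{2}\alpha=\langle v\times\rho,\nabla f\rangle^{2}/(|v\times\rho|^{2}\,|\nabla f|^{2})$ in coordinates. Taking $v=(v_1,v_2,0)$, $\rho=(x_1,x_2,1)$ and $\nabla f=(-x_1,-bx_2,1)$ (the value at $y=1$ recalled just before Lemma~\ref{lem:CriterionC1}), one computes $v\times\rho=(v_2,-v_1,\,v_1x_2-v_2x_1)$, hence
\[
\langle v\times\rho,\nabla f\rangle=-2v_2x_1+(b+1)v_1x_2,\qquad |v\times\rho|^{2}=v_1^{2}+v_2^{2}+(v_1x_2-v_2x_1)^{2},
\]
together with $|\nabla f|^{2}=x_1^{2}+b^{2}x_2^{2}+1$. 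Clearing the strictly positive denominator from each of $\cos^{2}\alpha>\tfrac12$, $\cos^{2}\alpha=\tfrac12$, $\cos^{2}\alpha<\tfrac12$ yields exactly the three displayed relations of the statement. No degenerate case intervenes: as $v\neq0$ and $v,\rho$ have distinct third components $0$ and $1$, they are linearly independent, so $v\times\rho\neq0$, while $|\nabla f|^{2}\ge1>0$ always.

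I do not anticipate a genuine obstacle here: granted Lemma~\ref{lem:CriterionC1}, the argument is a finite computation, already sketched (for the spacelike case) in the text preceding the statement. The only things worth double-checking are the equivalence between the $\cos$- and $\cos^{2}$-formulations (so that the lightlike set really is the common boundary of the other two regions) and the signs in the cross product, both of which are dealt with above.
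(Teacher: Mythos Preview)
Your proposal is correct and follows essentially the same route as the paper: specialize Lemma~\ref{lem:CriterionC1} to $\theta=\pi/4$, pass from the angle condition to $\cos^2\alpha\gtreqless\tfrac12$, and expand in coordinates using $v=(v_1,v_2,0)$, $\rho=(x_1,x_2,1)$, $\nabla f=(-x_1,-bx_2,1)$. Your added remarks that $v\times\rho\neq0$ (from linear independence) and $|\nabla f|^2\ge1$ justify clearing the denominator and make explicit that squaring gives an equivalence rather than merely an implication, points the paper leaves implicit.
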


Note that the non-timelike (i.e., spacelike or lightlike) vectors divide into two separate sets corresponding to the normal vector being close to $\nabla f$ or $-\nabla f$. More precisely, if $v$ is non-timelike and $(v,\rho,\nabla f)$ is a negatively oriented frame, then $\langle v\times\rho,w\rangle<0$ for every positive timelike vector $w$, which means that any $w\in C^+_\varrho$ projects to a vector $D\pi w\in C_1^+$ that form a positively oriented frame $(v,D\pi w)$. This motivates the following definition.

\begin{definition}\label{def:barrier}
    A nonzero vector $v\in T\RR^2\times\{1\}$ is a \emph{barrier vector} if it is non-timelike, i.e., if every timelike vector is transverse to it. Furthermore, it is a \emph{positive (negative) barrier vector} if $\langle v\times\rho,\nabla f\rangle<0$ ($>0$), i.e., if every positively oriented timelike vector $D\pi w$ forms a positive (negative) frame $(v,D\pi w)$ with it. 

    A parametrized curve $\gamma(t)$ with velocity $\dot\gamma(t)$ being a positive (negative) barrier vector for all $t$ is called a barrier.
\end{definition}
\begin{remark} The name barrier is chosen because it controls the flow direction of positive timelike vectors: They can go one way through a barrier, but not the other.

Note that reversing the orientation of a positive barrier makes it a negative barrier. The name is chosen here such that the positively oriented boundary of $\Omega^+$ (defined below) is a positive barrier. 
\end{remark}

We would like to distinguish three different cases:
\begin{itemize}
    \item If $\rho$ is $C^\pm_\varrho$-spacelike, then the cones $C_1^\pm$ are actually two disjoint proper cones.
    \item If $\rho$ is $C^\pm_\varrho$-timelike, then every $v$ admits both positive and negative preimages, i.e., $C_1^+=C_1^-=T\RR^2\times\{1\}$. In this case timelike vectors have complete manouvrability.
    \item If $\rho$ is $C^\pm_\varrho$-lightlike, then both cones $C_1^+$ and $C_1^-$ are half-spaces bounded by the set of lightlike vectors which is a hyperplane $\partial C_1^+=\partial C_1^-$. Thus, the tangent space splits into positive and negative time-vectors and lightlike barrier vectors (and there are no spacelike vectors) $T\RR^2\times\{1\}=C_1^+\cup C_1^-\cup \partial C^\pm_1$.
\end{itemize}

We use these cases to define the following sets of points in $\RR^2\times\{1\}$
\begin{align*}
    \Ccal &= \{\mathbf x\mid \pm\rho(\mathbf x,1)\notin \overline C^\pm_\varrho\},\\
    \Omega^\pm &=\{\mathbf x\mid \rho(\mathbf x,1) \in C^\pm_\varrho\},\\
    \partial\Omega^\pm &= \{\mathbf x\mid \rho(\mathbf x,1) \in \partial C^\pm_\varrho\}.
\end{align*}

Because of continuity of $C^\pm_\varrho$ and $\rho$ we have that $\partial\Omega^\pm$ is indeed the boundary of $\Omega^\pm$, justifying the notation.

The two flavors $\pm$ of $\Omega^\pm$ have the following consequence: Recall that at $\partial\Omega^\pm$ the set of lightlike vectors is a hyperplane.  This hyperplane is the tangent space of $\partial\Omega^\pm$. The signs are chosen such that positively oriented timelike vectors at $\partial\Omega^+$ point inwards $\Omega^+$ and at $\partial\Omega^-$ point outwards $\Omega^-$. 

Recall that a positively oriented  boundary $\gamma$ satisfies that for an outwards normal $n$ the frame $(n,\dot\gamma)$ is positive, or equivalently if $(\dot\gamma, -n)$ is positive. If the positive timelike vector are pointing inwards $\Omega^+$, then this means that the positively oriented boundary $\partial_\Omega^+$ is a positive barrier, making the terminology compatible.

\subsection{Shape of controllable regions}\label{subsec:DefineOmega}

Throughout this subsection, we assume that $\theta=\pi/4$.  

Recall that $\nabla f = (-x_1,-bx_2,1)$ and $\rho=(x_1,x_2,1)$. Then, $\partial\Omega^\pm$ is determined by $\rho$ lying on the boundary of the cone, i.e., by the equation 
\begin{align}
    \cos\angle(\nabla f,\rho) =& \frac{-x_1^2 - bx_2^2+1}{\sqrt{(x_1^2+b^2x_2^2+1)(x_1^2+x_2^2+1)}}\stackrel != \pm\cos \theta = \pm\frac 1{\sqrt2}\nonumber\\
    \Leftrightarrow  2( -x_1^2 - bx_2^2+1)^2 =\label{eq:RhoOnCone}& (x_1^2+b^2x_2^2+1)(x_1^2+x_2^2+1)\nonumber\\
    \Leftrightarrow 0 =& \quad x_2^2 (x_1^2 - x_2^2 + 1) \\
    &+\frac 1b 4x_2^2 (1-x_1^2) \nonumber\\
    &+\frac 1{b^2} \left( (x_1^2+1)(x_1^2+x_2^2+1) - 2 (-x_1^2+1)^2\right)\nonumber.
\end{align}
This is a fourth degree polynomial depending on $b$, but to understand the corresponding algebraic curves it suffices to understand a quadratic polynomial. To this end, we use two mechanisms: 
\begin{itemize}
    \item Since $b$ is large, we interpret the curve as successive perturbation of the $b$-less term. 
    \item Since all variables appear only as squares, we define the new variables $X=x_1^2, Y=x_2^2$ (axies symmetry along $x_1$ and $x_2$) and study the curve for this new equation. In order to recover the original curve, we then drop everything but the +/+ quadrant and ''unwrap'' it onto the other quadrants.
\end{itemize}

Thus, we look at the curve
\begin{align*}
    0&=Y (X - Y + 1) + \frac 1b 4Y(1-X) + \frac 1{b^2} \left( (X+1)(X+Y+1) - 2 (-X+1)^2\right)\\
    &=Y (X - Y + 1) + \frac 1b 4Y(1-X) + \frac 1{b^2} \left( -X^2+XY+6X +Y-1\right).
\end{align*}

We think of this as the curve $\{Y(X-Y+1) =0\}=\{Y=0\}\cup\{X-Y+1=0\}$, see the left Figure in~\ref{fig:perturbunfold},
first perturbed by 
$\frac 1b 4Y(1-X)$
and then by 
$ \frac 1{b^2} \left( -X^2+XY+6X +Y-1\right).$

\begin{figure}[h]
\centering
\includegraphics[width=.5\textwidth]{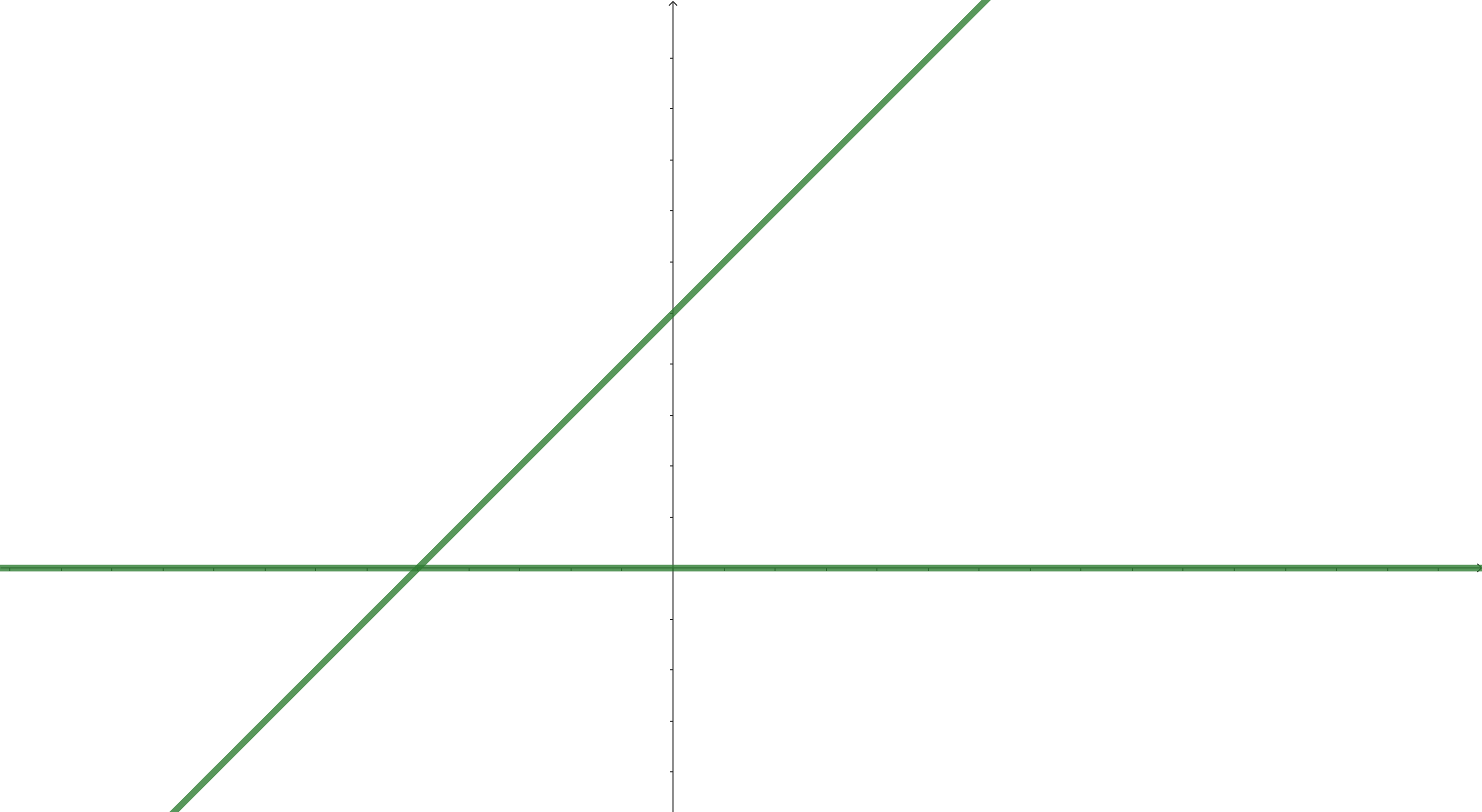}\hfill
\includegraphics[width=.5\textwidth]{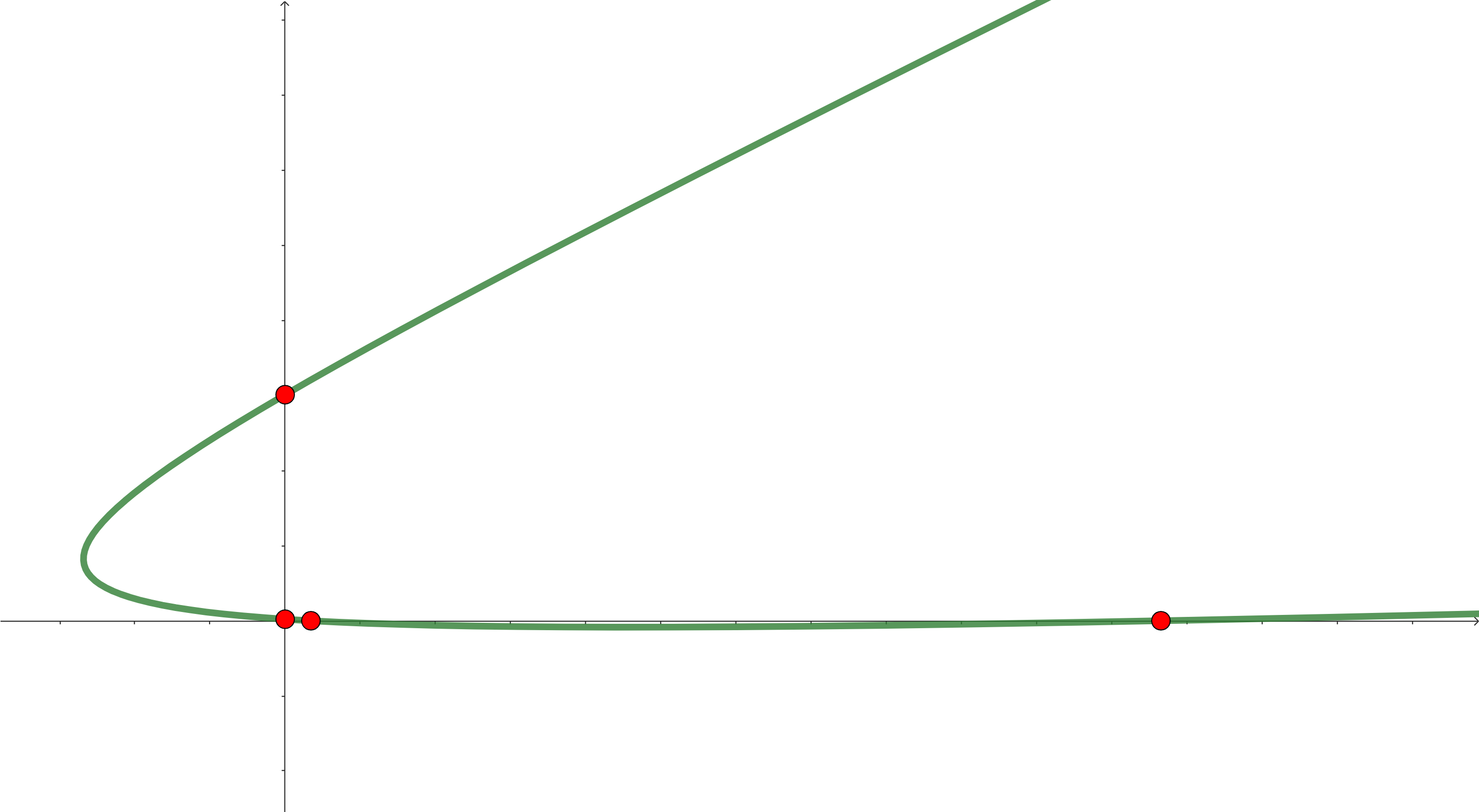}
    \caption{Left one sees the first approximation of the curve for $X$ and $Y$ (for $b=8$), right the second approximation, where the red dots mark the crossing of the axis.}
    \label{fig:perturbunfold}
\end{figure}

The crucial observation is that the curve $(X-Y+1)=0$ is very stable in the positive quadrant and no perturbation will do more than wiggle it. The curve $Y=0$ is very unstable. The first perturbation $4Y(1-X)$ is transversely zero at $Y=0$ and thus does not perturb the curve $Y=0$ after all. The second perturbation, see the middle Figure in~\ref{fig:perturbunfold}, is at $Y=0$ equal to
$$\frac 1{b^2} \left( -X^2+6X -1\right),$$
which is positive in the interval $X\in (3- \sqrt 8,3+\sqrt8)$ and negative outside this interval. Thus, (since $Y(X-Y+1)$ is positive above the $X$-axis), the main line is perturbed down between these two points and up outside these two points. 

Then, by restricting to the positive quadrant and taking the square root of each coordinate, we unfold the quadrant fourfold onto the plane as depicted in the left Figure in~\ref{fig:unfold}. Note that $\pm\sqrt{3\pm \sqrt 8}=\pm(\sqrt 2\pm 1)$. The right Figure in~\ref{fig:unfold} gives an impression of the cone structure.

\begin{figure}[h]
\centering
\includegraphics[width=.5\textwidth]{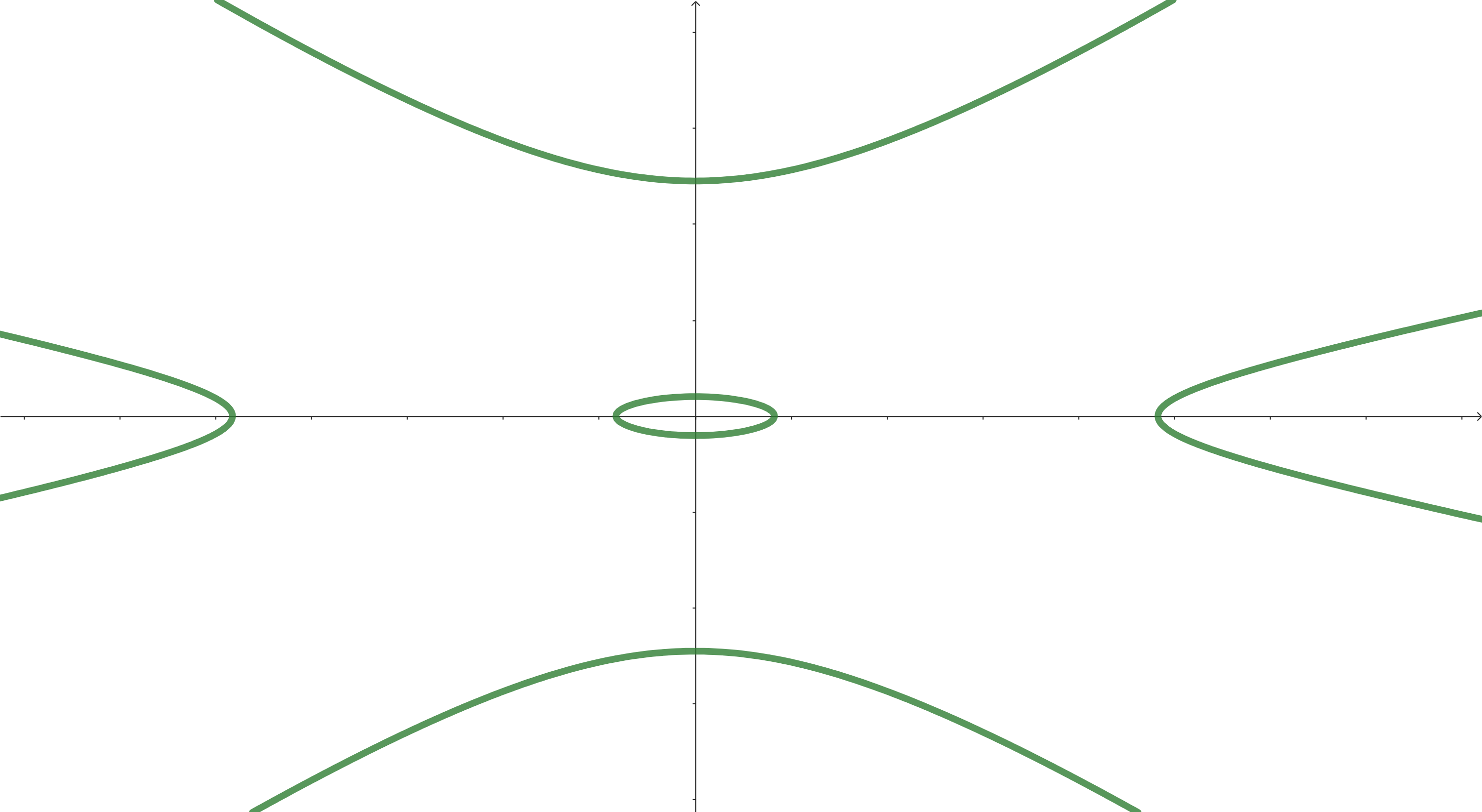}\hfill
\includegraphics[width=.5\textwidth]{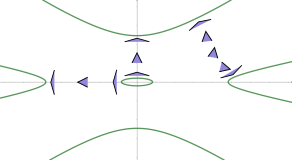}
    \caption{Left one sees the result of taking the square root of the ++ quadrant (`unfolding'). Right, there is a qualitative impression of how the cone fields look like: Close to $\partial \mathcal C$, the cones are close to half-spaces. Between the components of $\Omega^-$, the cones turn (and for $b$ large, they turn close to the  $x_1$-axis).}
    \label{fig:unfold}
\end{figure}

We can clearly see that (for $b=8$, and the behaviour stays the same for larger $b$) the plane is cut by this curve into 6 connected components. Since we know that the connected components of $\Omega^\pm$ and $\Ccal$ are separated by this curve, it suffices to identify the nature of just one vector in each component to find out how to label them. The projectivization of the two dimensional situation in Subsection~\ref{subsec:TwoDim} helps since it describes the circle at infinity, i.e., the situation when $y=0$: The four components that contain the asymptotic ends of the $x_1$- and $x_2$ axis belong to $\Omega$ since at the points $(\pm 1, 0,0)$ and $(0,\pm 1,0)$ the gradient $\nabla f$ is radial and thus $\rho\in C$. More precisely, these four regions  belong to $\Omega^-$ since $\rho$ is outwards pointing and $\nabla f$ is inwards pointing. Similarly, the eye in the middle belongs to $\Omega^+$ since at the point $(0,0,1)$ the gradient $\nabla f$ and $\rho$ coincide. We thus obtain:
\begin{itemize}
    \item $\Omega^-$ consists of the four unbounded simply connected regions of $\partial\Omega^c$,
    \item $\Omega^+$ consists of the bounded region of $\partial\Omega^c$,
    \item $\Ccal$ is the unbounded not simply connected region of $\partial\Omega^c$.
    \item The unbounded components of $\partial\Omega$ are negative barriers, while the bounded component of $\partial\Omega$ is a positive barrier.
\end{itemize}

\bparagraph{Apices along the $x_1$-axis}
As the calculations above have shown, the apices on the $x_1$-axis of the oval and the two curves left and right lie at the points $(\pm(\sqrt 2\pm 1),0)$. Note that this means that there are barrier vectors on the intervals 
$$(-(\sqrt 2 + 1),-(\sqrt 2 -1))\times\{0\}\quad \mbox{ and }\quad  (\sqrt 2 - 1,\sqrt 2 + 1) \times\{0\}.$$
At these points the vertical vector is by symmetry a barrier vector. At the boundaries of the intervals, the vertical vector is the only barrier vector (and lightlike).


\bparagraph{Apices along the $x_2$-axis} If $x_1=0$, then Equation~\ref{eq:RhoOnCone} becomes
$$b^2 x_2^4 - (b^2+4b+1) x_2^2 +1 = 0,$$
which has zeroes at
$$\pm\frac {\sqrt{b^2+4b+1\pm\sqrt{(b^2+4b+1)^2-4b^2}}}{\sqrt 2 b}.$$ 

It shall be convenient to give a name to the $+-$ version
$$\underline \beta(b)=\underline\beta = \frac {\sqrt{b^2+4b+1-\sqrt{b^4+8b^3+14b^2+8b+1}}}{\sqrt 2 b},$$
and similarly the $++$ version shall be called $\overline \beta$.
For more precise asymptotics of $\underline\beta$, we recall the Taylor expansion of $\sqrt{1+x}=1+\frac12 x -\frac 18 x^2 +O(x^3)$. Then, we can determine the asymptotic behaviour by
\begin{align*}
    \underline \beta^2 &= \frac {b^2+4b+1-\sqrt{b^4+8b^3+14b^2+8b+1}}{ 2 b^2}\\
    &=\frac 12 + \frac 2b + \frac 1{2b^2} -\frac 12 \sqrt{1+\frac8b +\frac{14}{b^2}+\frac 8{b^3}+\frac 1{b^4}}\\
    &= \frac 12 + \frac 2b + \frac 1{2b^2} - \\
    &\quad -\frac 12 \left( 1 + \frac12\left(\frac8b +\frac{14}{b^2}+\frac 8{b^3}+\frac 1{b^4}\right) -\frac 18 \left(\frac8b +\frac{14}{b^2}+\frac 8{b^3}+\frac 1{b^4})\right)^2+O(b^{-3})\right)\\
    &=\frac 12 + \frac 2b + \frac 1{2b^2} - \left( \frac12  + \left(\frac2b +\frac{14}{4b^2}\right) - \left(\frac4{b^2}\right)+O(b^{-3})\right)\\
    &= \frac 1{b^2} + O(b^{-3}),
\end{align*}
and thus we conclude that $\underline \beta(b)\sim \frac 1b$ for $b\to\infty$. Similarly, $\overline\beta(b) \to 1$.

\subsection{Points of causal discontinuity}\label{subsec:PoinsOfDiscontinuity}

As we have seen above, the two following points lie on $\partial\Omega$:
$$q=(\sqrt 2 + 1,0)\in\partial\Omega^-,\quad p=(-(\sqrt 2 - 1),0)\in\partial\Omega^+.$$
By abusing notation we use the same names for the preimages 
$$q=(\sqrt 2 + 1,0,1,0)\in (\pi\circ \varrho)^{-1}(q),\quad p=(-(\sqrt 2-1),0,1,0)\in(\pi\circ\varrho)^{-1}(p).$$ 
We claim that $p$ and $q$ violate causal continuity. We prove this claim by showing that reflectivity does not hold. We split the statement into the following two lemmas:

\begin{lemma}\label{lem:IplusInclusion}
    $I^+(p)\subseteq I^+(q)$.
\end{lemma}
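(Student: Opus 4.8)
\emph{Plan.} This is a push-up. I first reduce the statement to showing $p\in\overline{I^+(q)}$. On a Morse--Lorentz spacetime the broken chronological relation $\ll$ is transitive (concatenate broken timelike curves) and each broken chronological past $I^-(r)$ is open in $\RR^4$ (a broken timelike curve ending at $r$ can be $C^0$-perturbed, keeping its velocities strictly inside the open cone field $C^+$ and keeping its breaks at the critical point). Granting $p\in\overline{I^+(q)}$: for any $r\in I^+(p)$ the open set $I^-(r)$ is a neighbourhood of $p$, hence meets $I^+(q)$ in a point $s$; then $q\ll s\ll r$, so $r\in I^+(q)$. Thus $I^+(p)\subseteq I^+(q)$.

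\emph{Producing $p\in\overline{I^+(q)}$.} I work in the affine plane $\RR^2\times\{1\}$ of Subsection~\ref{subsec:PlaneProjection}, in which $q$ projects to $(\sqrt2+1,0)\in\partial\Omega^-$ and $p$ to $(-(\sqrt2-1),0)\in\partial\Omega^+$. The construction has two steps. (i) Build a future $C_1^+$-timelike curve $\bar\gamma$ starting at $(\sqrt2+1,0)$: it immediately leaves $\Omega^-$ into $\Ccal$ (positive timelike vectors point out of $\Omega^-$ because $\partial\Omega^-$ is a negative barrier), traverses $\Ccal$, crosses the positive barrier $\partial\Omega^+$ into $\Omega^+$, and --- using the complete maneuverability inside $\Omega^+$, where $C_1^+$ is the whole tangent plane by Lemma~\ref{lem:CriterionC1} --- is steered to an arbitrary $\mathbf x_\varepsilon\in\mathrm{int}\,\Omega^+$ with $|\mathbf x_\varepsilon-(-(\sqrt2-1),0)|<\varepsilon$. (ii) Lift $\bar\gamma$ to a future timelike curve $\gamma$ in $\RR^4$ from $q$: a lift has the form $\gamma(t)=(y(t)\bar\gamma(t),\,y(t),\,0)$, and $\dot\gamma(t)$ is $C^+$-timelike as soon as $\dot y/y$ is chosen, for each $t$, in the nonempty open set of values $u$ for which $u\,\rho+(\dot{\bar\gamma}(t),0)$ is $C^+$-timelike at $(\bar\gamma(t),1,0)$ (this set is nonempty precisely because $\dot{\bar\gamma}(t)$ is $C_1^+$-timelike); solving this ODE with $y(0)=1$ gives $\gamma$, starting at $q$. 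Its endpoint is $(\lambda\mathbf x_\varepsilon,\lambda,0)$ for some $\lambda>0$, and since $\mathbf x_\varepsilon\in\mathrm{int}\,\Omega^+$ the radial direction there is future timelike, so appending a short radial segment inside $\Omega^+$ moves the endpoint to $(\mathbf x_\varepsilon,1,0)$, which lies within $O(\varepsilon)$ of $p$. Letting $\varepsilon\to0$ gives $p\in\overline{I^+(q)}$.

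\emph{Main obstacle.} The substance is step~(i): that a future $C_1^+$-timelike curve can actually be routed from $\partial\Omega^-$ across $\Ccal$ into $\Omega^+$, which requires the description of the cone field $C_1^+$ on $\Ccal$ from Subsection~\ref{subsec:DefineOmega}. This is the ``easy'' direction --- the obstruction that blocks the opposite inclusion in Lemma~\ref{lem:IminusExclusion} is precisely what is absent here --- but it still has to be carried out. A secondary technical point is the radial bookkeeping in step~(ii): one must control $\dot y/y$ (keeping it small while $\bar\gamma$ is in $\Ccal$) so that the lift reaches $\Omega^+$ with $y\le1$, so that the final short radial segment inside $\Omega^+$ raises $y$ back to exactly $1$ without overshooting.
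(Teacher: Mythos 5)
Your plan is a valid push-up reduction, and your strategy is genuinely different from the paper's. The paper avoids the affine plane entirely for this lemma: it follows a radial lightlike ray from $q$ into the critical point $0$, relaxes it to a tight timelike hyperbola in the $x_1y_1$-plane, uses a logarithmic spiral in the $\mathbf y$-plane to distribute over a small ring $S_\varepsilon$, and then exhausts $\Omega^+$ from there. You instead stay away from the critical point, build a $C_1^+$-timelike path in the quotient plane from $(\sqrt2+1,0)\in\partial\Omega^-$ across $\Ccal$ into $\Omega^+$, lift it by solving an ODE for $y$, and invoke openness and transitivity of $\ll$ to upgrade $p\in\overline{I^+(q)}$ to $I^+(p)\subseteq I^+(q)$. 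The reduction step itself is sound (openness of $I^-(r)$ for $r\neq 0$ follows from the standard local-convex-neighbourhood argument plus a union over interior points of the first unbroken segment, not really from the ``keeping its breaks'' perturbation you describe, but the conclusion is correct).

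You have correctly flagged where the remaining work lies, and I'd add one sharpening. Step (i) is actually easy once one notices that along the segment of the $x_1$-axis from $x_1=\sqrt2+1$ down to $x_1=\sqrt2-1$ the vector $D\pi\nabla f=(-2x_1,0)/y$ is a future $C_1^+$-timelike vector pointing straight toward $\Omega^+$, so following the axis works; inside $\Omega^+$ one has full manoeuvrability except at the single point $\mathbf x=0$, which a small detour avoids. The more serious bookkeeping is in step (ii), and it is \emph{not} only a matter of keeping $\dot y/y$ small while in $\Ccal$: a short computation at $\mathbf x=(x_1,0)$ with $\dot{\bar\gamma}=(-1,0)$ shows that the admissible $u=\dot y/y$ interval for a future timelike lift is $\{u: \tfrac12(1-6x_1^2+x_1^4)u^2+x_1(3-x_1^2)u+\tfrac12(x_1^2-1)>0\}$, and this interval is bounded away from $0$ from below (forcing $u>0$, hence $y$ to increase) as soon as $x_1<1$, i.e.\ already in a portion of $\Ccal$ and throughout $\Omega^+$. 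So the lift necessarily overshoots in $y$ unless one \emph{precompensates} by taking $u$ very negative near $q$ (possible, since the admissible interval is unbounded below as $x_1\to\sqrt2+1$) and then raises $y$ back to $1$ with the radial segment. This works, but it is genuine work that your sketch leaves out. Overall: a correct and more structural alternative to the paper's explicit construction, at the price of an ODE-bookkeeping step that the paper's through-the-origin route avoids by construction.
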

\begin{proof}
    In Lorentz geometry this would follow from the push up lemma. Since our situation is not Lorentz, we perform the proof explicitly. There is a radial lightlike curve from $q$ to 0. From there, we can reach an $\varepsilon$-ring $S_\varepsilon:=(0,0)\times S_\varepsilon^1$ by following radial timelike curves. Finally, from this ring we can timelikely reach almost all of $\Omega_+$. By taking the union over all $\varepsilon$ we have thus found a way to exhaust all of $\Omega_+$ by non-spacelike curves starting at $q$. On the other hand, the future of $p$ is contained in $\Omega^+$. So if we can relax the non-spacelike curves that exhaust $\Omega_+$ to timelike curves, then we are done. The relaxation happens in three steps, see the three parts of Figure~\ref{fig:IplusInc} for reference.
    
\begin{figure}[h]
\centering
\includegraphics[width=.3\textwidth]{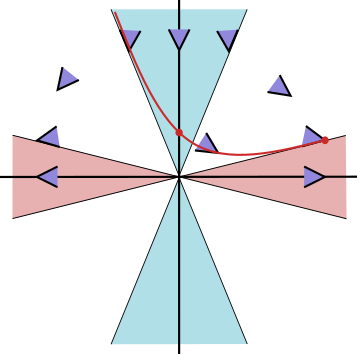}\hfill
\includegraphics[width=.3\textwidth]{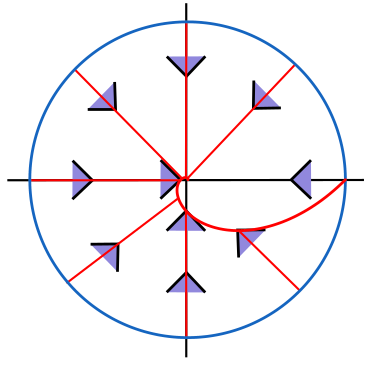}\hfill
\includegraphics[width=.3\textwidth]{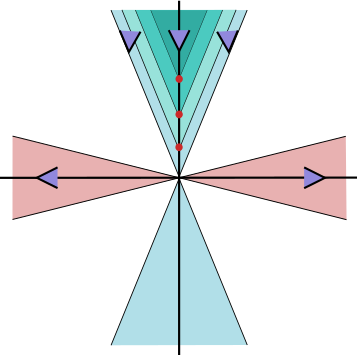}
    \caption{The left figure shows a red hyperbolic timelike trajectory in the $x_1y_1$-plane starting from $q$, missing 0 and intersecting the $y_1$-axis close to 0. The middle figure shows how to reach $S_\varepsilon$ by following a logarithmic spiral in the $y_1y_2$-plane. The right figure illustrates how to fill out $\Omega^+$ from $S_\varepsilon$ with $\varepsilon\to0$.}
    \label{fig:IplusInc}
\end{figure}
    
    We first relax the radial lighlike curve from $q$ to 0 to a timelike curve from $q$ to $(0,0,\varepsilon^2,0)$. For this it suffices to work in the $x_1,y_1$-plane, which we call here $x,y$-plane for ease of notation. Note that in this plane the gradient vector field is $\nabla f (x,y)=\begin{pmatrix}-x\\y\end{pmatrix}= \begin{pmatrix} -1 & 0 \\0&1\end{pmatrix}\begin{pmatrix} x\\y\end{pmatrix}.$ The lightlike direction that radially points towards the origin is then $\nabla f$ rotated counterclockwise by $\frac \pi 4$ and we define $V:=\operatorname{Rot}_{\frac \pi 4} \nabla f$. We verify
    \begin{align*}
        V q &= \frac 1{\sqrt 2}\begin{pmatrix} 1&-1\\1&1\end{pmatrix} \begin{pmatrix} -1 & 0 \\0&1\end{pmatrix}q  = \frac 1{\sqrt 2}\begin{pmatrix} -1&-1\\-1&1\end{pmatrix} \begin{pmatrix} \sqrt 2 + 1\\ 1 \end{pmatrix}\\
        &= \frac 1{\sqrt 2} \begin{pmatrix}-(\sqrt 2 + 1) -1 \\- (\sqrt 2 + 1) +1 \end{pmatrix} = \frac 1{\sqrt 2} \begin{pmatrix}- \sqrt 2(\sqrt 2 + 1)\\ - \sqrt 2\end{pmatrix}
        \\
        &= -q,
    \end{align*}
    where the scaling chosen here implies that the integral curve exponentially converges to 0: $\varphi_V^t q = e^{-t} q$. The relaxation is performed by pushing off the lightlike vector field by rotating it clockwise by an angle $\epsilon\ll\varepsilon^2$, which makes it into a timelike vector:
    \begin{align*}        
    W &:= \operatorname{Rot}_{\frac \pi 4-\epsilon} \nabla f = \frac 1{\sqrt 2}\begin{pmatrix} -\cos\epsilon +\sin\epsilon &-\cos\epsilon -\sin\epsilon \\-\cos\epsilon -\sin\epsilon & \cos\epsilon -\sin\epsilon \end{pmatrix} \\
    &= V+ \frac \epsilon{\sqrt{2}}\begin{pmatrix} 1 & -1\\-1 &-1\end{pmatrix} + O(\epsilon^2) = V+ \epsilon \operatorname{Rot}_{-\frac\pi 2} V + O(\epsilon^2). 
    \end{align*}
    Thus, $W$ is a perturbation of $V$. Since the Eigenvalues of $V$ are $\pm 1$ (and thus far from each other), the Eigenvalues of $W$ will be close to $\pm 1$ and thus the matrix is diagonalizable with Eigenvectors $\tilde q_\pm$ close to the Eigenvectors of $V$, which are $q=(1+\sqrt 2, 1)$ and $(1-\sqrt 2,1)$. The push-off direction makes it clear that $q$ lies slightly above $\tilde q_+$, so the integral curve $\varphi^t_Wq$ starting at $q$ follows a tight hyperbola whose forward and backward asymptotics are the two rays $\RR_+ \tilde q_\pm$. In particular, this curve crosses the positive $y$-axis at a point that lies between 0 and $(0,\varepsilon^2)$ since we chose $\epsilon$ small in comparison to $\varepsilon^2$. That is where we stop the trajectory and go radially out until we reach $(0,\varepsilon^2)$.

    \vspace{1em}
    We now show that from $(0,0,\varepsilon^2,0)$ we can still timelikely connect to any point on the ring $\varepsilon(0,0,\cos\sigma,\sin\sigma)\in S_\varepsilon$. This is easily done by first following a logarithmic spiral outwards in the plane $\mathbf x=0$ until the angle is correct, then we go radially out until we reach $S_\varepsilon$. Note that this is possible since any logarithmic spiral with angle to the radial curve less than $\pi/2$ is timelike, and since the radial ratio between two consecutive branches of a logarithmic spiral has a constant factor the spiral can for $\varepsilon$ small enough perform a full turn before growing in radius from $\varepsilon^2$ to $\varepsilon$. Having connected $q$ to every point on the ring $S_\varepsilon$, we can now timelikely exhaust $\Omega_+$ and the proof is done. 

\end{proof}

\begin{remark}
    In fact, this proof works (by careful choice of push-off of the initial lightray from $q$ to $0$) for any point $q$ in the preimage of $\partial\Omega^-$ and any point $p$ in the preimage of $\partial\Omega^+$.  
\end{remark}

\begin{lemma}\label{lem:IminusExclusion}
    There is $b_0$ such that for all $b>b_0$ we have $I^-(q)\nsubseteq I^-(p)$. 
\end{lemma}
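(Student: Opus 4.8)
The plan is to produce a witness point $r\in\RR^{4}$ with $r\in I^-(q)$ but $r\notin I^-(p)$; exhibiting such an $r$ is exactly the statement of the lemma (and, together with Lemma~\ref{lem:IplusInclusion}, it yields Theorem~\ref{thm:main} via the reflectivity characterization of causal continuity in Definition~\ref{def:CausalContinuity}). I would choose $r$ so that its projection $\bar r=\pi(\varrho(r))$ lies just inside the right-hand unbounded component of $\Omega^-$ (the one whose $x_1$-apex is $q=(\sqrt2+1,0)$), with $\mathbf y$-coordinate near $(1,0)$.

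\emph{The inclusion $r\in I^-(q)$} is the easy half. At $q$ the only barrier vector is the vertical one, so the past cone $C_1^-$ there is an open half-plane pointing into that $\Omega^-$-component; following a short explicit past-directed timelike curve out of $q$ into that component (in the spirit of the curves built in the proof of Lemma~\ref{lem:IplusInclusion}) lands at such an $r$.

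\emph{The exclusion $r\notin I^-(p)$} is the substance of the lemma. Suppose toward a contradiction that $\gamma$ is a future-directed timelike curve from $r$ to $p$; away from the stable manifold its projection $\bar\gamma=\pi\circ\varrho\circ\gamma$ is a future-directed $C_1^+$-timelike curve from $\bar r$ to $p\in\partial\Omega^+$ (a passage of $\gamma$ through $\{\mathbf y=0\}$ is dealt with through the circle at infinity). The core of the argument is to build barrier hypersurfaces in $\RR^{4}$ and trap $\gamma$. The building blocks are the lifts $\varrho^{-1}\bigl(\pi^{-1}(\Gamma)\bigr)$ of the components $\Gamma$ of the curves $\partial\Omega^{\pm}$ analysed in Subsection~\ref{subsec:DefineOmega}: by Lemmas~\ref{lem:CriterionC1}--\ref{lem:CriterionC2} each $\Gamma$ is a barrier in the $y=1$ plane, and by the crucial observation in Subsection~\ref{subsec:PlaneProjection} that $\varrho$- and $\pi$-preimages of non-timelike vectors are again non-timelike, each such lift has nowhere-timelike tangent hyperplane, hence is a genuine barrier hypersurface that positively oriented timelike curves cross only in the direction fixed by the sign of $\Gamma$. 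I would then trap $\bar\gamma$ in stages: crossing the lift of the right $\partial\Omega^-$-component it must \emph{leave} that component, so afterwards $\bar\gamma$ runs in $\Ccal$; crossing the lift of the oval $\partial\Omega^+$ it could only \emph{enter} $\Omega^+$, which it can never leave and inside which it can therefore never reach $p\in\partial\Omega^+$; hence $\bar\gamma$ stays in $\Ccal$ throughout. Finally, in $\Ccal$ the cones $C_1^\pm$ are two disjoint \emph{proper} cones, and I would use the lifts of the four $\partial\Omega^-$-components together with the four-dimensional extensions of the breaking eigenspaces of the tilted-gradient matrices $A_\pm$ of Subsection~\ref{subsec:TwoDim} to subdivide $\varrho^{-1}\pi^{-1}(\Ccal)$ into cells so that no positively oriented timelike curve can pass from the cell adjacent to $q$ around the thin lens $\Omega^+$ into the cell adjacent to $p$, contradicting the existence of $\bar\gamma$.

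\emph{Where $b$ large enters, and the main obstacle.} The breaking eigenspaces of $A_\pm$ are real (so they actually cut out separating lines at the circle at infinity) only for $\cos\theta\ge GM(1,b)/AM(1,b)$, i.e.\ for $\theta=\pi/4$ and $b\ge1$ precisely when $b$ lies above the threshold $3+2\sqrt2$; this already rules out small anisotropy and gives a first lower bound for $b_0$. The decisive and most delicate step is the last one: extending these one-dimensional breaking eigenspaces from $\{y=0\}$ to genuine separating hypersurfaces on $\{y>0\}$ and showing that these, together with the lifted $\partial\Omega$-curves, seal off the channel of $\Ccal$ between the thin lens $\Omega^+$ and the $\pm x_2$-tongues once $b$ is large — equivalently, lifting the \emph{dynamical} separation of the two-dimensional picture of Subsection~\ref{subsec:TwoDim} to four dimensions. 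A secondary but genuine technical issue is the $1/2$-Hölder regularity of the projected cone structure along $\partial\Omega^+$ near $p$: this is exactly the causal-bubbling phenomenon (as in Example~1.11 of~\cite{CG}) that fails to break causal continuity for small $b$ yet succeeds for $b$ large, and it has to be controlled when ruling out that $\bar\gamma$ slips into $\overline{\Omega^+}$ near $p$ through the pinch. For $b=8$ (Theorem~\ref{thm:B8}) all the inequalities in this argument become explicit and can be checked by hand.
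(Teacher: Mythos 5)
Your framework is right in outline --- project to the $y=1$ plane, recognize that non-timelike vectors lift to non-timelike vectors (so planar barriers lift to barrier hypersurfaces), and then separate $p$ from $q$ by a barrier --- and this is exactly the setup of the paper. But there is a genuine gap where the work actually happens, and it is not the gap you think it is.

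Your plan is to trap $\bar\gamma$ in cells cut out by the components of $\partial\Omega^\pm$ together with ``four-dimensional extensions of the breaking eigenspaces of $A_\pm$,'' and you acknowledge that producing those extensions is ``the decisive and most delicate step.'' The paper never extends those eigenspaces. In the projected picture the breaking directions of $A_\pm$ live only on the circle at infinity (the $y=0$ slice); for $y>0$ they give you nothing, and the annular region $\Ccal$ is connected, so the components of $\partial\Omega^\pm$ alone do not seal off the channel from $q$ around $\Omega^+$ to $p$. What the paper does instead is construct a single explicit barrier curve through $p$ out of three pieces: a segment of $\partial\Omega^+$ starting at $p$; an ``interpolation'' curve that peels off $\partial\Omega^+$ (Lemma~\ref{lem:BreachTheGap}); and a hyperbola $x_2=\beta\sqrt{1+at^2}$ shooting off to infinity (Lemma~\ref{lem:HyperbolicBarrier}). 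Together with its mirror image this curve separates the plane into a left region containing $I^-(p)$ and a right region containing the rightmost island of $\Omega^-$ and hence $I^-(q)$, giving the stronger statement $I^-(q)\cap I^-(p)=\emptyset$. The hyperbolic piece is the object that plays the role you assign to ``extended eigenspaces,'' but it is an explicit algebraic curve verified directly against the barrier criterion of Lemma~\ref{lem:CriterionC2}, not an extension of anything from $y=0$.

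The second, related, mislabelling is your treatment of the $1/2$-H\"older causal bubbling. You call it ``a secondary but genuine technical issue \dots\ to be controlled when ruling out that $\bar\gamma$ slips into $\overline{\Omega^+}$ near $p$.'' In the paper it is the opposite of a nuisance: it is the constructive mechanism by which the barrier itself escapes from $\partial\Omega^+$. Since $\partial\Omega^+$ is an integral curve of the barrier field and is a closed oval, a Lipschitz barrier field would force any barrier tangent to $\partial\Omega^+$ to remain on it forever and never reach the hyperbola. Lemma~\ref{lem:FloatAway} exploits the non-uniqueness of integral curves of a $\sqrt{x}$-type field to produce a solution that coincides with $\partial\Omega^+$ up to time $T$ and then drifts away at quadratic rate $\tfrac14 c^2(t-T)^2$; the estimate $\partial_x(\Delta/A^2)\gtrsim b$ in Lemma~\ref{lem:BreachTheGap} shows that for $b$ large this drift rate is large enough to bridge the gap up to the starting height $(1+\varepsilon)/b$ of the hyperbola. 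That is also where ``$b$ large'' actually enters: not through the reality threshold $b\ge 3+2\sqrt 2$ for the eigenvalues of $A_\pm$ (which only controls the $y=0$ boundary) but through the asymptotic positivity of the coefficients in~\eqref{eq:ConfirmHyperbolicBarrier} and the $O(b)$ transversality of $\Delta$ at $\partial\Omega^+$.

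So: your proposal correctly identifies the scaffolding and the reduction to the $y=1$ plane, and your ``exclusion by barriers'' idea is the right idea, but the actual barrier is left unconstructed, the mechanism that lets the barrier leave $\partial\Omega^+$ is misidentified as an obstacle rather than the engine, and the proposed eigenspace-extension route is not the one that works.
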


We will even show that the intersection $I^-(q)\cap I^-(p)$ is empty. For this, we construct a non-timelike barrier going through $p$ such that $I^-(p)$ lies completely on the left of the barrier, but $I^-(q)$ is contained in the rightmost island of $\Omega^-$, which will lie on the right hand side of the barrier. We describe the upper half of the barrier (for $x_2>0)$ piecewise, the lower half will be a reflection of the upper half, see Figure~\ref{fig:barrier}. 
\begin{itemize}
    \item The first piece of barrier starts at $p=(-(\sqrt 2 -1),0)$ and follows $\partial\Omega^+$ until $x_1=-0.1$. This segment is a barrier by the construction of $\Omega$. 
    \item An interpolation barrier deviates from $\partial\Omega^+$ essentially in a quadratic function described in Lemma~\ref{lem:BreachTheGap}. It will arrive at a point $(0,(1+\varepsilon)/b)$, which is the starting point of the third barrier.
    \item The third piece of barrier is a hyperbola described in Lemma~\ref{lem:HyperbolicBarrier}
\end{itemize}

\begin{figure}[h]
\centering
\includegraphics[width=.5\textwidth]{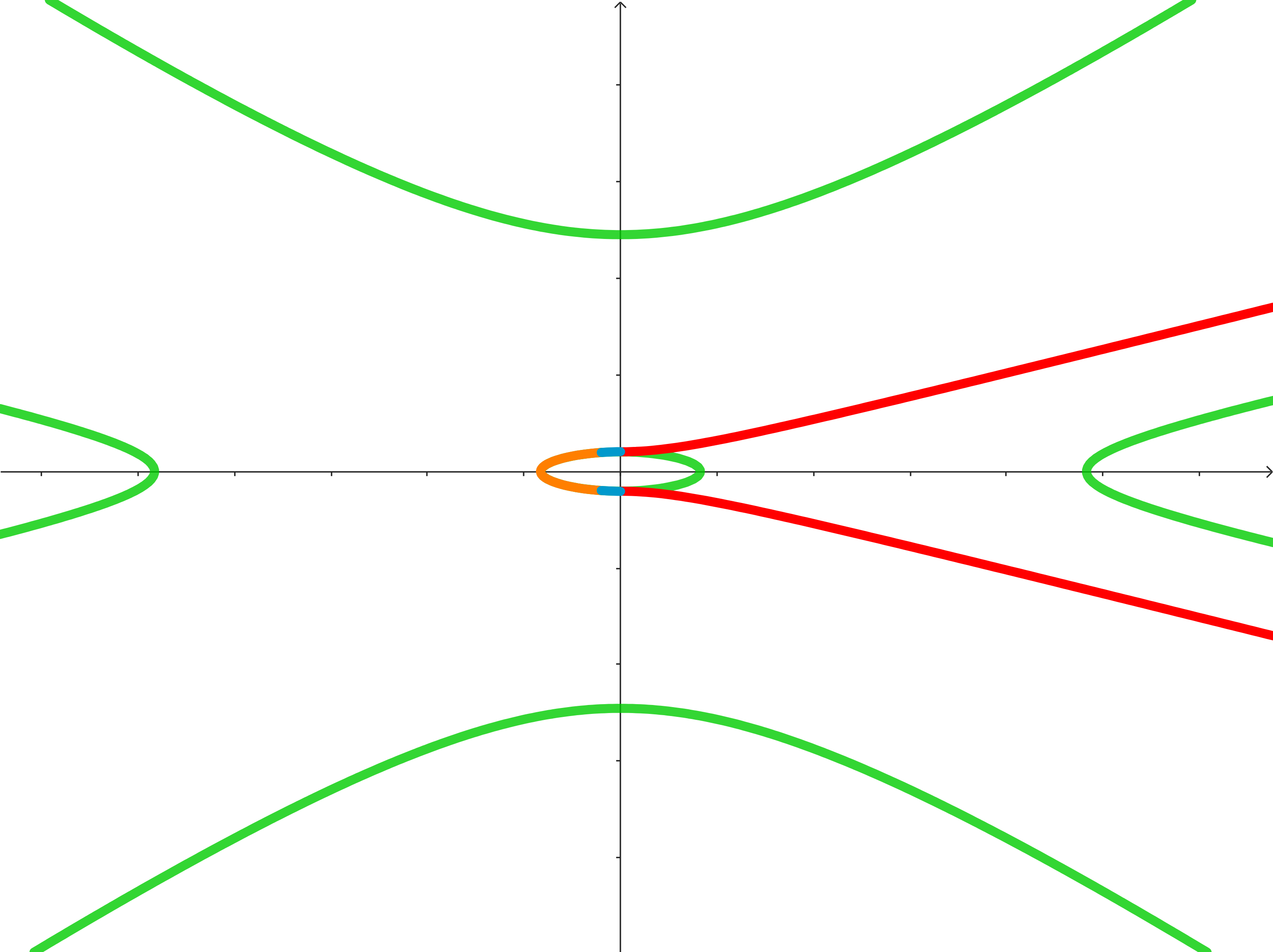}\hfill
\includegraphics[width=.5\textwidth]{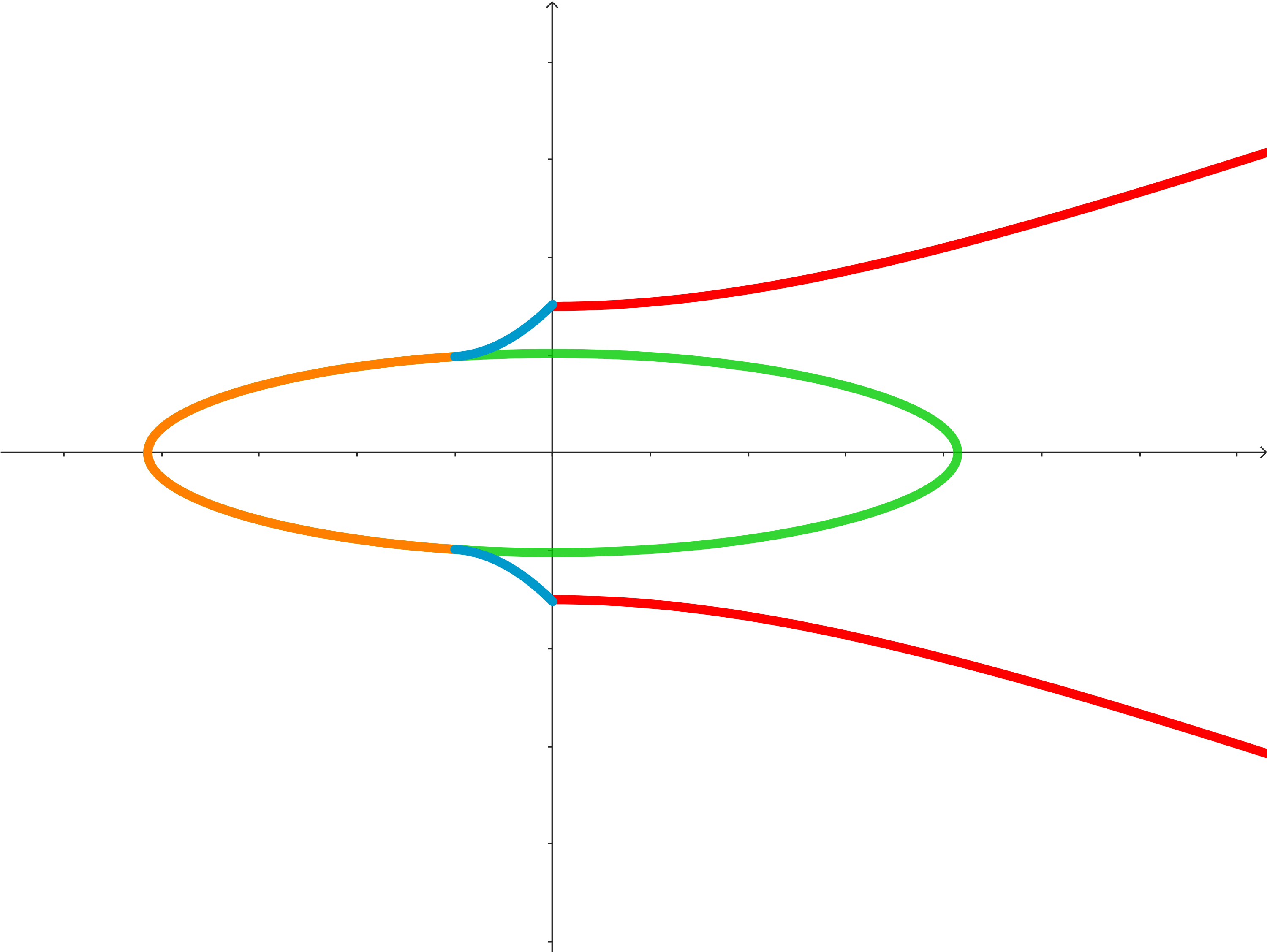}\hfill
    \caption{On the left is the actual barrier from Lemma 3.9 (with the parameters $\beta=0.102,a=6$ found in Section~\ref{sec:B8} for $b=8$. The orange curve is the first piece following $\partial\Omega$, the blue line is the interpolation curve and the red line is the hyperbolic piece of barrier. On the right side is an exaggerated and zoomed in version to see better the quality of the curves.}
    \label{fig:barrier}
\end{figure}

\begin{remark}
    Our barrier will show that Lemma~\ref{lem:IminusExclusion} is also true for a preimage of $\partial\Omega^-$ in the same component as $q$ and a preimage of $\partial\Omega^+$ nearby $p$. Thus, there are many points of causal discontinuity on the boundary of past/future of the critical point. 
\end{remark}

\subsection{Characterization of barriers}\label{subsec:horizontal}
We search not only barrier vectors, but a barrier. This is a 1-dimensional submanifold whose tangent vectors all admit the above relation. Let $\gamma(t)=(x_1(t),x_2(t))$ parametrize this submanifold. Then, $(v_1,v_2)=(\dot x_1,\dot x_2)$. Since we are free to reparametrize, we may choose $v_1=1$, so that $x_1=t$. Then (with writing $x=x_2, v=v_2, \|\nabla f\|^2=G=t^2+b^2x^2+1$), the condition for non-timelikeness from Lemma~\ref{lem:CriterionC2}  becomes
\vspace{1em}
\begin{equation}\label{eq:Criterion3}
    \begin{split}
    2(-2v t + (b+1)x)^2 \geq& {(1 +v^2 + (x-vt)^2)G},\\
   \Leftrightarrow \quad 0 \leq& v^2(8t^2 -(1+t^2)G)\\
    &+2vxt(-4(b+1)+G)\\
    &+(2(b+1)^2x^2-(1+x^2)G),\\
   \mbox{We rename the coefficients: }  0 \leq& Av^2+ 2B v+C.
   \end{split}
\end{equation}
Using the midnight formula, the vectors that provide a non-timelike barrier are thus $v\in [v_-,v_+]$ with
$$v_\pm = \frac{-B}A \pm \sqrt{\frac\Delta {A^2}}$$
if the discriminant is non-negative 
$$\Delta=B^2-AC\geq 0,$$
and otherwise there are no non-timelike barrier vectors. 
We compute (note that the terms that do not contain $G$ cancel out)
\begin{align*}
    \Delta &= x^2t^2(G-4(b+1))^2-(8t^2 -(1+t^2)G)(2(b+1)^2x^2-(1+x^2)G)\\
    &= G(-t^4+(b^2-4b+1)t^2x^2-b^2x^4 + 6t^2+(b^2+4b+1)x^2-1)\\
    &= -G( x^4b^2 - x^2((b^2-4b+1)(1+t^2)+ 8b) + (t^4 -6t^2 + 1) )
\end{align*}  
This expression is non-negative whenever the second factor is non-positive and then there exists a barrier. For the equality (lightlike) case, we can use the midnight formula for $x^2$:
\begin{align*}
    x_{\pm\pm}(t) & = \pm \frac 1{\sqrt 2 b} \sqrt{(b^2-4b+1)(1+t^2)+ 8b\pm\ldots}\\
    & \quad \sqrt{\ldots \pm \sqrt{((b^2-4b+1)(1+t^2)+ 8b )^2 -4 b^2(t^4 -6t^2 + 1)}}.
\end{align*}
We are, of course, interested in the $(+-)$-choice of signs. We start the investigation of the function $x_{+-}$ by proving that it is monotone falling for $t>0$ small. This is the same as saying that $2b^2 x_{+-}^2$ is monotone falling for $t>0$ small. But for this note that the function has the form $\varphi(t)-\sqrt{\varphi^2(t)-\psi(t)}$, which has the derivative 
$$\varphi'(t)-\frac{\varphi(t)\varphi'(t)}{\sqrt{\varphi^2(t)-\psi(t)}} + \frac{\psi'(t)}{2\sqrt{\varphi^2(t)-\psi(t)}}.$$
Since for $t>0$ small we have $\varphi>0,\varphi'>0,\psi>0,\psi'<0$, the derivative is indeed negative. By symmetry, the function is monotone rising for $t<0$. 

To gauge the magnitude of the function, we estimate by completing the square $$\varphi^2-\psi \leq \varphi^2 - \psi + (\frac1{2}\frac \psi\varphi)^2=(\varphi-\frac1{2}\frac\psi\varphi)^2$$
and thus 
\begin{align}\label{eq:BoxEstimate}
    x_{+-}&\geq\frac1{\sqrt 2 b}\sqrt{ \frac \psi{ 2\varphi}}=\frac 1{2b}\sqrt{\frac \psi\varphi}=\sqrt{\frac{1-6t^2+t^4}{(b^2-4b+1)(1+t^2)+8b}}.
\end{align}
At $t=0$ this evaluates to 
$$x_{+-,max} \geq \frac 1{2b}\sqrt{\frac{4b^2}{b^2+4b+1}}=\frac{1}{\sqrt{b^2+4b+1}},$$
which means that asymptotically it is essentially $\frac1b$ (which reconfirms the earlier findings).
For $t=-0.1$ we estimate  
\begin{align*}
    x_{+-}(-0.1) &\geq 0.964776 \frac 1b + O(\frac1{b^2}).
\end{align*}
Asymptotically for $b\to\infty$, this estimate will approximate $0.96\frac1b$. For the estimates below it will suffice to assume $x_{+-}(t)\in\frac1b(\frac 34,\frac 32)$ for all $t\in[-0.1,0]$.

\subsection{A hyperbolic barrier}

Here, we describe the hyperbolic barrier starting at $(0,\beta)$ for $\beta\in \pm (\underline\beta,\overline\beta)$. We are interested in $\beta$ close to $\underline\beta\sim\frac 1b$, so we choose $\beta=(1+\varepsilon)/b$.
\begin{lemma}\label{lem:HyperbolicBarrier}
    For any $\varepsilon>0$ small enough there is $b$ large enough such that for any $a$ and for $\beta=(1+\varepsilon)/b$ the curve 
    $$(t, \gamma(t)),\quad t\in[0,\infty), \quad \gamma(t) = \beta \sqrt{1+a t^2}$$
    is a spacelike barrier.
\end{lemma}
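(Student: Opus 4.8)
The plan is to substitute the curve into the pointwise spacelikeness criterion of Lemma~\ref{lem:CriterionC2} and, using two algebraic simplifications special to this hyperbola, reduce the resulting inequality to a single quadratic inequality in $t^2$ whose leading coefficient is manifestly positive. Since a spacelike vector is in particular non-timelike, checking spacelikeness at every point automatically shows the curve is a barrier in the sense of Definition~\ref{def:barrier}, so it suffices to verify the strict ``spacelike'' case of Lemma~\ref{lem:CriterionC2} along the curve.

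Write $x_1=t$, $x_2=x=\gamma(t)=\beta\sqrt{1+at^2}$ and $v=\dot\gamma(t)=at\beta^2/\gamma$. By Lemma~\ref{lem:CriterionC2} (here $\theta=\pi/4$), the velocity $(1,v)$ is spacelike exactly when
\[
2\bigl(-2vt+(b+1)x\bigr)^2>\bigl(1+v^2+(x-vt)^2\bigr)\bigl(t^2+b^2x^2+1\bigr).
\]
The decisive point is that $\gamma^2=\beta^2+at^2\beta^2$ makes the combinations on the left collapse:
\[
x-vt=\frac{\beta^2}{\gamma},\qquad -2vt+(b+1)x=\frac{\beta^2\bigl((b+1)+(b-1)at^2\bigr)}{\gamma}.
\]
Inserting these, $v^2=a^2t^2\beta^4/\gamma^2$ and $x^2=\beta^2(1+at^2)$, and clearing the common positive factor $\gamma^2$, the inequality reads
\[
2\beta^4\bigl((b+1)+(b-1)at^2\bigr)^2>\bigl(\gamma^2+\beta^4(1+a^2t^2)\bigr)\bigl(t^2+b^2\beta^2(1+at^2)+1\bigr).
\]

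Now use $\beta=(1+\varepsilon)/b$, so that $b^2\beta^2=(1+\varepsilon)^2$ and the last factor equals $t^2+(1+\varepsilon)^2(1+at^2)+1$, which no longer depends on $b$. Dividing by the positive constant $(1+\varepsilon)^2/b^2$ turns the inequality into $P_b(u)>0$ for all $u:=t^2\ge0$, where
\[
P_b(u)=2(1+\varepsilon)^2\Bigl((1+au)+\tfrac{1-au}{b}\Bigr)^2-\Bigl((1+au)+\tfrac{(1+\varepsilon)^2(1+a^2u)}{b^2}\Bigr)\bigl(u+(1+\varepsilon)^2(1+au)+1\bigr)
\]
is a quadratic polynomial in $u$. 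Its $b\to\infty$ limit factors cleanly as
\[
P_\infty(u)=(1+au)\Bigl(\bigl((1+\varepsilon)^2-1\bigr)+\bigl((1+\varepsilon)^2a-1\bigr)u\Bigr),
\]
a product of two factors strictly positive on $u\ge0$ as soon as $(1+\varepsilon)^2a>1$; this is the only hypothesis on $a$ that is really used, and it holds in particular for all $a\ge1$ (the regime relevant to the barrier construction). Under it, $P_\infty$ is a quadratic in $u$ with strictly positive constant term $(1+\varepsilon)^2-1$, nonnegative linear term, and strictly positive leading coefficient $a\bigl((1+\varepsilon)^2a-1\bigr)$, so $P_\infty(u)\ge\bigl((1+\varepsilon)^2-1\bigr)+a\bigl((1+\varepsilon)^2a-1\bigr)u^2$.

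To conclude, $P_b-P_\infty$ is a polynomial of degree $\le2$ in $u$ all of whose coefficients are $O(1/b)$, hence $|P_b(u)-P_\infty(u)|\le\tfrac{C(\varepsilon,a)}{b}(1+u^2)$; taking $b$ so large that $C(\varepsilon,a)/b<\min\{(1+\varepsilon)^2-1,\ a((1+\varepsilon)^2a-1)\}$ forces $P_b(u)>0$ for every $u\ge0$, which is exactly what was needed. The real obstacle is this last step: since the correction $P_b-P_\infty$ grows like $u^2=t^4$, it cannot be dismissed as a uniformly small perturbation over the noncompact range $t\in[0,\infty)$; the argument only closes because the reduction above exposes that the $u^2$-coefficient of the limiting polynomial is strictly positive --- equivalently, that the asymptotic ray $x_2\sim\beta\sqrt{a}\,x_1$ of the hyperbola is itself strictly spacelike. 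Bookkeeping which terms feed that leading coefficient at each order in $1/b$ is the part of the computation that needs genuine care.
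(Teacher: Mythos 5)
Your proof is correct and follows the same conceptual route as the paper: substitute the hyperbola into the spacelikeness criterion, reduce to a polynomial inequality in $u=t^2$ using $\beta=(1+\varepsilon)/b$, and observe that as $b\to\infty$ all three coefficients tend to strictly positive limits. Your write-up is cleaner in two respects. First, the collapse identities $x-vt=\beta^2/\gamma$ and $-2vt+(b+1)x=\beta^2\bigl((b+1)+(b-1)at^2\bigr)/\gamma$ bypass the paper's raw expansion via the normal vector $n=(v,-1,\gamma-vt)$, and the factorization $P_\infty(u)=(1+au)\bigl(((1+\varepsilon)^2-1)+((1+\varepsilon)^2 a-1)u\bigr)$ makes transparent at a glance why the constant, linear and quadratic coefficients (which match the paper's $t^0,t^2,t^4$ coefficients exactly) are positive. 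Second, you make explicit the uniformity-in-$t$ issue over the noncompact interval: since $P_b-P_\infty$ grows like $u^2$, pointwise convergence alone is insufficient, and the argument closes precisely because the $u^2$-coefficient of $P_\infty$ is strictly positive; the paper handles this only implicitly by noting that all polynomial coefficients are asymptotically positive. One small caveat worth being aware of, which both proofs share: a lower bound on $a$ is genuinely needed (your $(1+\varepsilon)^2a>1$, the paper's explicit restriction ``fix $a>1$''), and the threshold on $b$ depends on $a$, so the quantifier ``for any $a$'' in the lemma statement is imprecise as written; this is a slip in the paper's statement rather than in either proof. Finally, you pass from ``each velocity is spacelike'' to ``the curve is a barrier''; strictly speaking Definition~\ref{def:barrier} additionally requires a consistent sign (positive or negative barrier vector) for all $t$, which here follows by continuity since the spacelike cones at each point split into two components and the velocity never becomes lightlike — the paper omits this point as well, so it is not a gap particular to your argument.
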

To prove the lemma, we investigate for which $b$ the derivative $(1,v(t))$ is a barrier vector (for all $t$). Note that this is just the piece in the $++$ quadrant, for the lower branch add a -. Also note that the asymptotic slope $\sqrt a\beta$, which we hope to be a timelike ray.

\begin{proof}
By our choices we have $\gamma=\beta\sqrt{1+at^2}$ and $v=a\beta t\frac 1{\sqrt{1+at^2}}$ and thus $v\gamma = a\beta^2 t$. The normal vector then is $n= (1,v,0) \times (t,\gamma, 1) = (v,-1,\gamma-vt)$ and we can compute the quantities in the criterion for being a barrier 
\begin{align*}    
    \langle n, \nabla f(t,\gamma,1)\rangle^2 &= (-vt + b\gamma + \gamma -vt )^2= ((b+1)\gamma -2vt)^2\\
    &= (b+1)^2\gamma^2 - 4(b+1) v\gamma t + 4v^2t^2 \\
    &=(b+1)^2\beta^2(1+at^2) - 4(b+1) a\beta^2 t^2 + \frac {4 a^2\beta^2 t^4}{1+at^2}\\
    &= \frac {4 a^2\beta^2 t^4}{1+at^2} + ( b^2-2b-3 ) a\beta^2 t^2 + (b+1)^2\beta^2, \\    
    |n|^2&=v^2+1+(\gamma -vt)^2 = v^2(1+t^2)-2vt\gamma + (1+\gamma^2),\\
    &= \frac {a^2\beta^2t^2(1+t^2)}{1+at^2} - 2a\beta^2 t^2 + 1 + \beta^2 (1+at^2) \\
    &= \frac {a^2\beta^2t^2(1+t^2)}{1+at^2} +(-a\beta^2)t^2 + (1 + \beta^2), \\
    |\nabla f|^2 &= 1+t^2+b^2\gamma^2 = (1+b^2\beta^2) + (1+ab^2\beta^2)t^2
\end{align*}
Thus, the criterion for being a barrier reads as usual
$$\left(\frac{\langle n,\nabla f\rangle}{|n||\nabla f|}\right)^2  \geq\frac 12 \quad \Leftrightarrow \quad 2\langle n,\nabla f\rangle^2 -|n|^2|\nabla f|^2 \geq 0$$
which we now multiply out and sort by powers of $t$.
\begin{align*}
    0\leq& 2\frac {4 a^2\beta^2 t^4}{1+at^2} + 2( b^2-2b-3 ) a\beta^2 t^2 + 2(b+1)^2\beta^2 \nonumber\\
    &- \left(\frac {a^2\beta^2t^2(1+t^2)}{1+at^2} +(-a\beta^2)t^2 + (1 + \beta^2)\right)((1+b^2\beta^2) + (1+ab^2\beta^2)t^2)\nonumber\\
    =& \frac {8 a^2\beta^2 t^4-a^2\beta^2t^2(1+t^2)((1+b^2\beta^2) + (1+ab^2\beta^2)t^2)}{1+at^2}\nonumber\\
    &+  2( b^2-2b-3 ) a\beta^2 t^2 + 2(b+1)^2\beta^2 \nonumber\\
    &+\left(a\beta^2t^2 - (1 + \beta^2)\right)((1+ab^2\beta^2)t^2+(1+b^2\beta^2))\\       
    =& \frac1{1+at^2} (-a^2\beta^2(1+ab^2\beta^2)t^6 + (8 a^2\beta^2 - a^2\beta^2(1+b^2\beta^2)\nonumber \\ 
    &\quad - a^2\beta^2 (1+ab^2\beta^2))t^4  -a^2\beta^2(1+b^2\beta^2)t^2)\nonumber\\
    &+ t^4 a\beta^2(1+ab^2\beta^2)  \nonumber\\
    &+t^2(2( b^2-2b-3 ) a\beta^2 +a\beta^2(1+b^2\beta^2) - (1 + \beta^2)(1+ab^2\beta^2))\nonumber\\
    &+ 2(b+1)^2\beta^2 -  (1 + \beta^2)(1+b^2\beta^2)\nonumber\\
     =& \frac { -a^2\beta^2(1+ab^2\beta^2)t^6 + (6 a^2\beta^2 - a^2b^2\beta^4- a^3b^2\beta^4 )t^4  -a^2\beta^2(1+b^2\beta^2)t^2}{1+at^2}\\
    &+t^4a\beta^2(1+ab^2\beta^2) \nonumber \\
    &+t^2(( b^2-4b-5 ) a\beta^2 - 1 - \beta^2 )\nonumber\\
    &+(- b^2\beta^4 + (b^2+4b+1)\beta^2 - 1)\nonumber.
\end{align*}
We multiply by $1+at^2$ and we obtain
\begin{align*}    
    0\leq& (-a^2\beta^2(1+ab^2\beta^2)t^6 + (6 a^2\beta^2 - a^2b^2\beta^4- a^3b^2\beta^4 )t^4  -a^2\beta^2(1+b^2\beta^2)t^2)\nonumber\\
    &+(1+at^2)t^4  a\beta^2(1+ab^2\beta^2)  \nonumber\\
    &+(1+at^2)t^2(( b^2-4b-5 ) a\beta^2 - 1 - \beta^2 ) \nonumber\\
    &+(1+at^2)(- b^2\beta^4 + (b^2+4b+1)\beta^2 - 1)\nonumber\\
    =& \quad\; t^6(-a^2\beta^2(1+ab^2\beta^2)+ a(a\beta^2(1+ab^2\beta^2))) \nonumber\\
    & + t^4((6 a^2\beta^2 - a^2b^2\beta^4- a^3b^2\beta^4) +a\beta^2(1+ab^2\beta^2) \\ 
    &\quad +a(( b^2-4b-5 ) a\beta^2 - 1 - \beta^2 ) ) \nonumber\\
    & + t^2(-a^2\beta^2(1+b^2\beta^2) + (( b^2-4b-5 ) a\beta^2 - 1 - \beta^2 ) \\
    &\quad + a(- b^2\beta^4 + (b^2+4b+1)\beta^2 - 1))\nonumber \\
    &+(- b^2\beta^4 + (b^2+4b+1)\beta^2 - 1),
\end{align*}
so by simplifying we finally find the form
\begin{equation}\label{eq:ConfirmHyperbolicBarrier}
    \begin{split}
    0\leq& \quad\; t^4( a^2\beta^2(b^2-4b+1) - a^3b^2\beta^4 - a ) \\
    & + t^2(- (a^2 + a)b^2\beta^4 +(2ab^2 - a^2- 4a - 1)\beta^2 - (1 +a) ) \\
    &+(- b^2\beta^4 + (b^2+4b+1)\beta^2 - 1).
    \end{split}
\end{equation}

\bparagraph{Asymptotic analysis}
We fix $a>1$ and $\varepsilon>0$ arbitrarily. If we set $\beta=(1+\varepsilon)/b,$ then for $b\to \infty$ the above inequality becomes
\begin{align*}
0\leq& t^4( a^2(1+\varepsilon)^2 - a +O(b^{-1})) \\
    &t^2( +2a(1+\varepsilon)^2 - (1+a) +O(b^{-1})) ) \\
    &+( (1+\varepsilon)^2 - 1 +O(b^{-1}))).
\end{align*}
Note that all coefficients are asymptotically positive  and thus the inequality is asymptotically satisfied. This is interesting for small $\varepsilon$ since it is then easier to connect to the starting point of the hyperbolic barrier. 

Please note that for a fixed $b$ the terms of lower order do matter.
\end{proof}


\subsection{The interpolation curve}\label{subsec:interpolation}
We will describe how we deviate from $\partial\Omega^+$ to the starting point of the hyperbola above. 

\begin{lemma}\label{lem:BreachTheGap}
    For $b$ large enough, there is a barrier starting at $(-0.1,x_{+-}(-0.1))$ and ending at $(0,(1+\varepsilon)/b)$.
\end{lemma}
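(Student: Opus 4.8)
The plan is to exhibit the interpolation barrier explicitly as a graph $x_2=\phi(t)$ of a function over the parameter $t=x_1\in[-0.1,0]$, and to check the barrier property pointwise via the criterion \eqref{eq:Criterion3}: with the normalisation $v_1=1$, the velocity $(1,\phi'(t))$ is a non-timelike barrier vector exactly when $A\phi'(t)^2+2B\phi'(t)+C\ge 0$, the coefficients $A,B,C$ being evaluated at $(t,\phi(t))$. On the region in question $x_2$ has order $1/b$, so $G=t^2+b^2x_2^2+1=O(1)$ and $A=8t^2-(1+t^2)G<0$; hence the condition is equivalent to $\Delta=B^2-AC\ge 0$ together with $\phi'(t)\in[\,m(t)-r(t),\,m(t)+r(t)\,]$, where $m=-B/A$ and $r=\sqrt\Delta/|A|$ are the centre and half-width of the admissible slope window, this window being a single point on $\partial\Omega^+$ (where $\Delta=0$) and opening up as one moves into $\Ccal$.

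For $\phi$ I take a quadratic deviation from $\partial\Omega^+$, namely $\phi(t)=x_{+-}(-0.1)+x_{+-}'(-0.1)\,(t+0.1)+\kappa\,(t+0.1)^2$: the linear coefficient is forced, because on $\partial\Omega^+$ only one barrier direction is available, so the curve must leave the first barrier piece — which is $\partial\Omega^+$, the graph of $x_{+-}$ — tangentially at $t=-0.1$, i.e.\ $\phi'(-0.1)=x_{+-}'(-0.1)=m(-0.1)$; and $\kappa$ is fixed by $\phi(0)=(1+\varepsilon)/b$, the initial height of the hyperbolic barrier of Lemma~\ref{lem:HyperbolicBarrier}, so that $\kappa=100\bigl((1+\varepsilon)/b-x_{+-}(-0.1)-0.1\,x_{+-}'(-0.1)\bigr)$. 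First I would record the $1/b$-asymptotics that make this well behaved: from \eqref{eq:BoxEstimate} and the corresponding expansion of its derivative — equivalently from $x_{+-}(t)^2\sim Q(t)/P(t)$ with $P(t)=(b^2-4b+1)(1+t^2)+8b$, $Q(t)=t^4-6t^2+1$ — one gets $x_{+-}(-0.1)=\tfrac1b\bigl(c_0+O(1/b)\bigr)$ and $x_{+-}'(-0.1)=\tfrac1b\bigl(c_1+O(1/b)\bigr)$ for explicit constants $c_0,c_1>0$, while $\underline\beta=\tfrac1b\bigl(1+O(1/b)\bigr)$ by Subsection~\ref{subsec:DefineOmega}. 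Hence $\kappa=\tfrac1b\bigl(100\varepsilon+O(1/b)\bigr)>0$ for $b$ large, and $b\phi(t)$, $b\phi'(t)$ stay bounded on $[-0.1,0]$.

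It then remains to verify the two requirements along the curve for $b$ large. \emph{(i) $\Delta(t,\phi(t))\ge 0$.} Factoring the quadratic $b^2u^2-P(t)u+Q(t)$ in $u=x_2^2$, with roots $x_{+-}(t)^2<x_{++}(t)^2$, gives $\Delta=G\,b^2\bigl(x_2^2-x_{+-}(t)^2\bigr)\bigl(x_{++}(t)^2-x_2^2\bigr)$; since $x_{+-}''<0$ on $[-0.1,0]$, Taylor's theorem and the choice of $\kappa$ give $\phi(t)-x_{+-}(t)\ge\kappa\,(t+0.1)^2\ge 0$, and as $\phi$ is increasing, $\phi(t)\le(1+\varepsilon)/b=O(1/b)<x_{++}(t)$ for $b$ large; hence $x_{+-}(t)\le\phi(t)<x_{++}(t)$, so $\Delta\ge 0$ (equality only at $t=-0.1$), i.e.\ the curve runs in $\Ccal\cup\partial\Omega^+$, where barriers exist. \emph{(ii) $\phi'(t)\in[\,m(t)-r(t),\,m(t)+r(t)\,]$.} At $t=-0.1$ this is the equality built into the ansatz. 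For $t>-0.1$ one establishes, on one side, a linear lower bound $r(t)\ge c\,(t+0.1)$ with $c>0$ independent of $b$ — the factor $\phi^2-x_{+-}^2$ being $\ge(\mathrm{const}/b)(t+0.1)^2$ while $x_{++}^2-\phi^2$, $G$ and $1/|A|$ are of order $1$ — and, on the other side, a bound $|\phi'(t)-m(t)|\le 2\kappa\,(t+0.1)+\int_{-0.1}^{t}|m'(s)|\,ds$, where the overshoot $2\kappa(t+0.1)=O\bigl((t+0.1)/b\bigr)$ and the drift of the window centre $m'$ is estimated directly from the explicit forms of $A,B,C$ and from $x_2,\phi'=O(1/b)$. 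For $b$ large and $\varepsilon$ small the right-hand side stays below $r(t)$ on all of $[-0.1,0]$, which is the barrier inequality; the lower branch $x_2<0$ follows by reflection.

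\textbf{The main obstacle} is step (ii) near the left endpoint: there the admissible window is degenerate, so the slope of $\phi$ has to be kept inside a window whose half-width $r(t)$ opens only linearly in $t+0.1$. This is a quantitative competition of linear-in-$(t+0.1)$ terms — the opening rate $\lim_{t\to-0.1^+}r(t)/(t+0.1)$, the drift rate $m'(-0.1)$ of the unique barrier direction along $\partial\Omega^+$, and the built-in overshoot $2\kappa$ — of which only the overshoot is visibly small. Making the opening rate dominate the others, which is exactly the transversality of $\phi$ to $\partial\Omega^+$ making itself felt, is what pins down how small $\varepsilon$ and how large $b$ must be; it rests on the explicit $1/b$-expansions of $x_{+-}$, $x_{+-}'$ and of $A,B,C$, and is the place where a careless estimate would fail.
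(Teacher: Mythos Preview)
Your route differs from the paper's in a significant way. The paper does not write down an explicit interpolation curve and then verify the slope window pointwise; instead it follows an integral curve of the barrier vector field $v=-B/A-D/A$ with $|D|\le\sqrt\Delta$, and invokes the $\tfrac12$-H\"older ODE Lemma~\ref{lem:FloatAway} after establishing the transversality estimate $\partial_x(\Delta/A^2)\gtrsim b$. That estimate makes the H\"older constant of order $\sqrt b$, so the vertical drift away from $\partial\Omega^+$ is of order $b\,(t+0.1)^2$, overshooting the $O(1/b)$ gap by two orders of magnitude in $b$. By contrast your quadratic coefficient is only $\kappa=O(1/b)$, so your step (ii) becomes a tight competition of linear-in-$(t+0.1)$ terms which you correctly flag as the main obstacle but do not resolve; the paper's argument never faces this competition because the drift rate is so large.

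There is also a concrete error that breaks the ansatz as written. You assert $\phi'(-0.1)=x_{+-}'(-0.1)=m(-0.1)$, i.e.\ that the tangent to $\partial\Omega^+$ coincides with the unique admissible barrier slope there. This is not the case. To leading order one has $B\sim -4t(bx)$ and $A\sim -2Q(t)$ on $\partial\Omega^+$ (using $(bx_{+-})^2\sim Q(t)/(1+t^2)$), so at $t=-0.1$ one finds $m=-B/A\approx 0.2$, an $O(1)$ quantity; whereas $x_{+-}'(-0.1)\approx 0.71/b\to 0$. With the window half-width $r(-0.1)=0$, your starting slope misses the single admissible value by an $O(1)$ amount, so $(1,\phi'(-0.1))$ is \emph{timelike} and the curve fails the barrier criterion at its very first point. (The paper's prose makes the same identification, but its actual interpolation follows the field with initial direction $m$, not $x_{+-}'$, so this discrepancy does not bite there.) Relatedly, your formula $\kappa=(100\varepsilon+O(1/b))/b$ is off: the linear extrapolation $x_{+-}(-0.1)+0.1\,x_{+-}'(-0.1)$ equals $\approx 1.036/b$ to leading order, not $1/b$, so $\kappa\approx(100\varepsilon-3.6)/b$, which is negative for small $\varepsilon$ and undermines your step (i) as stated.
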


The deviation process is similar to Example 1.11 in~\cite{CG}. We explain here the low regularity situation and in Lemma~\ref{lem:FloatAway} we recreate the example for low regularity causal structures that are general enough to apply to our situation.

\bparagraph{Hölder continuity of the causal structure}
The inner region $\Omega^+$ is roughly an ellipse, and its boundary is a non-timelike barrier (by construction). 
If we take a section of vector fields $V$ in $\Omega^c$ such that $V$ is 'between' $v_+$ and $v_-$, then an integral curve of $V$ is also a non-spacelike boundary. At the boundary of $\Omega_+$, $V$ follows $T\partial\Omega_+$, since there $v_\pm$ coincide and $V$ has no other choice. In the representation above, $V$ then has the form $V=(1,v)$  with
\begin{equation}\label{eq:HoelderVectorField} v = \frac{-B-D}A = \frac{-xt(G-2(b+1)) - D}{8t^2-(1+t^2)G} = \frac{xt(2b+1-b^2x^2-t^2)+D}{t^4+t^2x^2b^2+x^2b^2-6t^2+1}
\end{equation}
where $D\in[-\sqrt\Delta,\sqrt\Delta]$ is arbitrary.

\vspace{1em}
We start at the the boundary of $\Omega^+$, so the integral curve of $V$ has no choice but to be tangent to $\partial\Omega^+$. But the entire boundary $\partial\Omega_+$ is an integral curve of $V$ with those initial conditions. We want, however, that the integral curve at some point separates from $\Omega^+$. Thus, we can find such a barrier only if the integral curves of $V$ are not uniquely determined by initial condition. Luckily, $V$ is not Lipschitz but rather $1/2-$Hölder for a reasonable choice of $D$, assuming that $\Delta$ is transverse to zero at $\partial\Omega$. The magnitude of transversality will be determined in the proof of Lemma~\ref{lem:BreachTheGap}. Lemma~\ref{lem:FloatAway} below shows that \emph{given} transversality, there is a drift. As a consequence, we see that between the past $I^-(p)$ and $\partial\Omega^+\subseteq J^-(p)$ there is a causal bubble in the sense of~\cite{CG}. 

\begin{lemma}\label{lem:FloatAway}
    Assume $V:[0,\infty)\times\RR\rightarrow\RR$ satisfies 
    \begin{enumerate}
        \item  $V$ is Lipschitz for $x>0$,

        \item $V(0,t)=0$ for any $t\in\RR$,
        
        \item $c\sqrt x \leq V(x,t)\leq C\sqrt x$ for some uniform constants $c,C>0$.
    \end{enumerate}
     Then, for any $T>0$ there is a solution $X:\RR\rightarrow[0,\infty)$ to 
     \begin{equation}\label{ODE}
     X'(t)=V(X(t),t)  \end{equation}
     such that $X(t)=0$ for $t\leq T$ and 
    \[
    X(t)\geq\frac14c^2(t-T)^2 \quad\text{for all}\quad t\geq T.
    \]
    \end{lemma}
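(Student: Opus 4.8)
\textbf{Proof plan for Lemma~\ref{lem:FloatAway}.}

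The plan is to exhibit an explicit candidate solution on $[T,\infty)$ and verify it solves the ODE with the claimed lower bound, then glue it to the zero solution on $(-\infty,T]$. The natural guess, suggested by the hypothesis $V \asymp \sqrt{x}$, is that near $t=T$ the solution behaves like the solution of $X' = c\sqrt{X}$, namely $X(t) = \tfrac14 c^2 (t-T)^2$. So first I would set up the problem as a shooting/comparison argument rather than trying to solve the ODE exactly: define
\[
X(t) = 0 \text{ for } t\le T, \qquad X(t) = \text{(the Carathéodory solution of \eqref{ODE} with } X(T)=0\text{) for } t\ge T.
\]
The point is that because $V$ is only $1/2$-Hölder at $x=0$ (not Lipschitz), the initial value problem with $X(T)=0$ is \emph{not} uniquely solvable: besides the trivial solution $X\equiv 0$ there is a nontrivial one that leaves $0$. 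Making this rigorous is the core of the argument.

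The key steps, in order, are as follows. First, existence of \emph{some} solution on $[T,\infty)$ starting from $X(T)=0$: since $V$ is continuous (hypotheses (2) and (3) force continuity across $x=0$, with $V(0,t)=0$), Peano's theorem gives a local solution, which extends globally to the right because $V$ has at most linear growth in $x$ (indeed $V(x,t)\le C\sqrt x$ is sublinear). Second, and this is the crucial nontriviality step: show there is a solution that is \emph{not} identically zero past $T$, and in fact satisfies $X(t)>0$ for all $t>T$. The clean way is comparison: let $Y(t) = \tfrac14 c^2 (t-T)^2$ be the explicit subsolution — it satisfies $Y(T)=0$ and $Y'(t) = \tfrac12 c^2 (t-T) = c\sqrt{Y(t)} \le V(Y(t),t)$ by hypothesis (3). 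One then argues that the \emph{maximal} solution $X$ of the IVP dominates every subsolution, so $X(t)\ge Y(t) = \tfrac14 c^2(t-T)^2$, which is exactly the asserted bound and in particular shows $X$ leaves $0$. Third, glue: the concatenation of $X\equiv 0$ on $(-\infty,T]$ with this $X$ on $[T,\infty)$ is $C^1$ at $t=T$ (since $X'(T^+) = V(0,T)=0$) and solves \eqref{ODE} everywhere; hypothesis (1) (Lipschitz for $x>0$) guarantees nothing pathological happens once $X$ has become positive.

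The main obstacle is Step 2 — producing a solution that actually escapes $0$ and controlling it from below. The subtlety is that Peano gives \emph{a} solution but a priori it could be the trivial one; one must invoke the structure of the solution set. Two standard routes: (a) take the maximal solution $X_{\max}$ (which exists since the set of solutions is closed under sup) and use a differential-inequality / Gronwall-type comparison to get $X_{\max}\ge Y$; or (b) approximate by solving the IVP with $X(T)=\delta>0$ (where $V$ is Lipschitz, so the solution $X_\delta$ is unique and by comparison $X_\delta(t)\ge \tfrac14 c^2(t-T)^2$ once one checks the subsolution inequality persists for the shifted problem), then let $\delta\to 0^+$ and extract a locally uniform limit by Arzelà–Ascoli; the limit solves \eqref{ODE}, starts at $0$, and retains the lower bound $\tfrac14 c^2(t-T)^2$ by passing to the limit in the inequality. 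Route (b) is the most transparent and is what I would write up. The upper estimate $V\le C\sqrt x$ is only needed to ensure the solutions do not blow up in finite time (so that everything is defined on all of $[T,\infty)$) and plays no role in the lower bound.
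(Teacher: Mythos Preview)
Your proposal is correct, and Route~(b) is essentially what the paper does: it takes a sequence of initial conditions $(x_n,T)$ with $x_n\to 0^+$, solves the IVP (unique while positive, by hypothesis~(1)), and passes to the limit by continuous dependence, retaining the lower bound $\tfrac14 c^2(t-T)^2$ throughout. The paper's write-up is more elaborate in one respect: before taking the limit it analyzes each solution with positive initial data in \emph{backward} time, showing by an iterative comparison with the curves $\tfrac{a^2}{4}(t-\tau_a)^2$ (for $a=c,C$, where $\tau_a(x_0,t_0)=t_0-\tfrac{2}{a}\sqrt{x_0}$) that the solution hits zero at a unique finite time $\tau\in[\tau_c,\tau_C]$ and then extends by zero. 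This extra backward-time structure is not needed for the lemma as stated, so your streamlined version suffices and is cleaner. Your Route~(a), via the maximal Peano solution together with the subsolution $Y(t)=\tfrac14 c^2(t-T)^2$, is a genuinely more direct alternative that the paper does not use; it avoids the approximation-and-limit step altogether, at the modest cost of invoking the standard comparison theorem for maximal solutions (which does not require Lipschitz continuity at $x=0$).
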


\begin{proof}
Through any initial data $(x_0,t_0)$, the integral curve $(X(t),t)$ to the vector field $(V(x,t),1)$ exists and is unique as long as $X(t)>0$ by the theorem of Lindelöf-Picard. We show that there exists $\tau(x_0,t_0)\in[\tau_c(x_0,t_0),\tau_C(x_0,t_0)]$ and a unique solution $X:\RR\rightarrow[0,\infty)$ to \eqref{ODE} such that $X(t_0)=x_0$ and 
\begin{equation}
\begin{split}
X(t)&>0\,\,\text{for}\,\,t\in(\tau,\infty),\\
X(t)&=0\,\,\text{for}\,\, t\in(-\infty,\tau).
\end{split}
\end{equation}
Notice that $(3)$ implies that 
\begin{equation}\label{comp}
\begin{split}
\frac{C^2}{4}(t-\tau_C(x_0,t_0))^2&\leq X(t)\leq\frac{c^2}{4}(t-\tau_c(x_0,t_0))^2,\quad \text{for}\,\,\, t\in[\tau_C(x_0,t_0),t_0],\\
\frac{c^2}{4}(t-\tau_c(x_0,t_0))^2&\leq X(t)\leq\frac{C^2}{4}(t-\tau_C(x_0,t_0))^2,\quad \text{for}\,\,\, t\in[t_0,\infty),
\end{split}
\end{equation}
where $\tau_a(x,t) = t -\frac 2a \sqrt {x}$. Thus, $X(t)>0$ for $t\in[t_0,\infty)$. We set, for $n\in\mathbb{N}$, 
$$
(x_{n+1},t_{n+1})=(X(\tau_C(x_n,t_n)),\tau_C(x_n,t_n))
$$ 
and notice that for $t\in[t_{n+1},t_n]$, \eqref{comp} holds with every subscript $0$ replaced by $n$. It follows that either $x_{n+1}=0$ and $\tau=\tau_C(x_n,t_n)$ or 
\begin{equation}\label{iter}
\begin{split}
&0<x_{n+1}<\bigg(\frac{C-c}{C}\bigg)^2 x_n,\\
&[\tau_c(x_{n+1},t_{n+1}),\tau_C(x_{n+1},t_{n+1})]\subset[\tau_c(x_n,t_n),\tau_C(x_n,t_n)].
\end{split}
\end{equation}
Since $0\leq\tau_C(x,t)-\tau_c(x,t)\leq(\frac{1}{c}-\frac{1}{C})\sqrt x$, \eqref{iter} implies that there is $\tau$ contained in $[\tau_c(x_n,t_n),\tau_C(x_n,t_n)]$ for any $n\in\mathbb{N}$. The continuity of the solution gives 
$$
X(\tau)=\lim_{n\rightarrow\infty}X(t_n)=\lim_{n\rightarrow\infty}x_n=0.
$$
For $t<\tau$ we set $X(t)=0$ and it is easily seen that such an $X$ is unique.

\vspace{1em}
Now choose a sequence of points $(x_n,T)$ with $x_n>0$ that converges to $(0,T)$. By the above construction, the corresponding sequence of solutions of the Cauchy problem $(X_n(t),t)$ have interval of unique existence $(\tau_n,\infty)$ with $\tau_n<T$ and $\tau_n\to T$. Each of the solutions satisfies for $t>T$ the inequality $X_n(t)>\frac14c^2(t-T)^2$. Because of the continuous dependence of the solutions of Cauchy problems on their initial conditions, $(X_n(t),t)|_{[T,\infty)}$ converges to an integral curve of the vector field with the demanded properties.
\end{proof}

\begin{proof}[Proof of Lemma~\ref{lem:BreachTheGap}]
Lemma~\ref{lem:FloatAway} shows that at any point one can drift away from the barrier, essentially with the speed of the differential equation $\dot x = c\sqrt{x}$, which has the solution $\frac 14 c^2(t + const)^2$. If $b$ is large enough, then we will be easily able to drift up by a distance of $\varepsilon b$, which bridges the gap to the hyperbolic piece of barrier.

Remember that in Equation~\ref{eq:HoelderVectorField} the term $\frac DA$ was bounded by the square root of $\frac \Delta{A^2}=\frac{B^2-AC}{A^2}$. We want to apply Lemma~\ref{lem:FloatAway} to this vector field, but for this we require a linear estimate from below of this expression (as we know that along $\partial\Omega$ it is zero).

Thus, we compute the derivative of $\frac{\Delta}{A^2}$. 
$$\partial_x\frac\Delta{A^2} = \partial_x\frac{B^2-AC}{A^2}= \frac{2\partial_x B B A - 2\partial_x A  B^2 - \partial_x C A^2 +\partial_x A A C}{A^3}.$$
We recollect the variables and derivatives, where we use that $t\in[-0.1,0], xb= X\in [\frac 34,\frac 32]$ for $b$ large. 
\begin{align*}
    G&=1+t^2+X^2 \in [1.5,3.26]+O(\frac 1b),\\
    A&=8t^2 -(1+t^2)(1+t^2+X^2) \\
    &= -(1 +X^2 +t^2(X^2-6)+t^4) \in [-3.25,-1.5]+O(\frac 1b) ,\\
    B&=xt(-4(b+1)+G)=-4tX+O(\frac 1b) ,\\
    C&=2(b+1)^2x^2-(1+x^2)G = X^2 - 1-t^2 +O(\frac1b)
\end{align*}
\begin{align*}
    \partial_xA&=-(1+t^2)2bX ,\\
    \partial_xB&=t(-4(b+1)+G)+ xt2b^2x =-4tb+O(1) ,\\
    \partial_xC&=4(b+1)^2x-2xG-(1+x^2)2b^2x =2bX+O(1),\\
    2\partial_x B B A &= 2(-4tb+O(1))(-4tX+O(\frac 1b)) A =   32AXt^2b+O(1) ,\\
    - 2\partial_x A  B^2 &= -2(-(1+t^2)2bX)(-4tX+O(\frac 1b))^2 = 64t^2(1+t^2) bX^3+O(1)  ,\\
    - \partial_x C A^2 &= -(2bX+O(1)) A^2 = -2A^2Xb + O(1),\\
    +\partial_x A A C &= (-(1+t^2)2bX + O(1))A (X^2-1 -t^2 +O(\frac1b)) \\
    &= -(1+t^2)(X^2-1-t^2)2AXb +O(1).
\end{align*}
Thus, we estimate
\begin{align*}
    A^3\partial_x\frac\Delta{A^2} &= b\left(32AXt^2+64t^2(1+t^2) X^3-2A^2X-(1+t^2)(X^2-1-t^2)2AX\right)+O(1)\\
     &= b\left(32(A+2X^2)Xt^2+64t^4 X^3-2A^2X-(1+t^2)(X^2-1-t^2)2AX\right)+O(1)\\
     &\leq b\left(64t^4 X^3-2AX[A-X^2(1-t^2+32t^2\frac XA)+1-17t^2]\right)+O(1)\\
     &\leq b\left(0.022- 2AX[A-0.49X^2+1]\right)+O(1)\\
     &\leq b\left(0.022-2AX[-0.77]\right)+O(1) < -1.5 b + O(1).\\
    \partial_x\frac\Delta{A^2} &\geq \frac {-1.5b}{A^3}\geq 0.04b.
\end{align*}

Loosely speaking, we have shown that it is possible to choose the barrier $x_{+-}(t)+\frac 14 \alpha (t+0.1)^2$ for $\alpha$ in the order of magnitude of $0.04 b$. This way, the barrier reaches at $t=0$ a height of $x_{+-}(0)+0.0004b$, which for large $b$ is enough to breach the gap $(1+\varepsilon)/b - \underline \beta$ (recall that $\underline \beta\sim\frac1b$, so we overshoot our goal by two orders of magnitude).

More formally, we follow the vector field $\frac{-B}A+\frac{-D}A$, where the previous computation shows that we can freely choose $\frac{-D}A \leq \sqrt{0.04b(x-x_{+-})}$. Since $\frac{-B}A\geq 0$, the height of the solution can be estimated from below by ignoring the first summand $$x(t)\geq x_{+-}+0.04b(t+0.1)^2$$
as long as this height does not exceed $\frac 32 \frac 1b$, at which point the estimate above breaks down. However, since $\frac32\frac 1b> (1+\varepsilon)/b$ for small enough $\varepsilon>0$, we do not mind the breakdown of estimates. By Lemma~\ref{lem:FloatAway}, we can then choose a suitable later departure point (or just a smaller slope) to reach the target height of $(1+\varepsilon)/b$ precisely.
\end{proof}


\section{Specific barrier for $b=8$}\label{sec:B8}
In this section we show that the assertion of Lemma~\ref{lem:IminusExclusion} holds for $b=8$ by constructing a specific barrier in the same fashion as in the main proof.

\begin{lemma}\label{lem:IminusExclusionB8}
    For $b=8$ we have $I^-(q)\nsubseteq I^-(p)$. 
\end{lemma}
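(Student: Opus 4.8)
The plan is to reuse, at the concrete parameters $b=8$, waypoint $x_1=-0.1$, and hyperbola constants $a=6$, $\beta=0.102$, the same three-piece barrier through $p$ that drives the proof of Lemma~\ref{lem:IminusExclusion}: an arc of $\partial\Omega^+$ from $p=(-(\sqrt 2-1),0)$ to $(-0.1,x_{+-}(-0.1))$, an interpolation arc from there to $(0,0.102)$, and the hyperbola $\gamma(t)=\beta\sqrt{1+at^2}$, $t\ge 0$ (together with its reflection in $\{x_2<0\}$), with every asymptotic estimate of Section~\ref{sec:proof} replaced by a numerical one. Granting that this really is a non-timelike barrier at $b=8$, the separation argument of Lemma~\ref{lem:IminusExclusion} carries over verbatim: $I^-(p)$ is confined to one side of the barrier and $I^-(q)$ to the rightmost component of $\Omega^-$, which lies on the other side, so $I^-(q)\cap I^-(p)=\emptyset$; since $q$ is a regular point this gives $I^-(q)\nsubseteq I^-(p)$, and combined with Lemma~\ref{lem:IplusInclusion} (valid for all $b$) it also yields Theorem~\ref{thm:B8}.

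Two of the three pieces are cheap. The first arc is a barrier with no extra work, since the shape analysis of Subsection~\ref{subsec:DefineOmega} was already carried out at $b=8$ and exhibits $\partial\Omega^+$ as the graph $x_2=x_{+-}(x_1)$ over $[-(\sqrt 2-1),\sqrt 2-1]$. For the hyperbola one substitutes $b=8$, $a=6$, $\beta=0.102$ into \eqref{eq:ConfirmHyperbolicBarrier}; the right-hand side becomes a quadratic in $t^2$ whose three coefficients come out strictly positive (numerically about $4.86$ for $t^4$, about $0.065$ for $t^2$, and about $2.3\times10^{-3}$ for the constant term), so the inequality holds for all $t$ and $\gamma$ is a spacelike barrier by the computation proving Lemma~\ref{lem:HyperbolicBarrier}. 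The only delicate point here is that the constant term $-b^2\beta^4+(b^2+4b+1)\beta^2-1$ is small; its positivity is equivalent to $\beta^2$ lying strictly between the roots of $b^2u^2-(b^2+4b+1)u+1$, i.e.\ to $\underline\beta(8)<0.102<\overline\beta(8)$, which one should record using exact radicals ($\underline\beta(8)=\sqrt{(97-\sqrt{9153})/128}\approx0.1019$ and $\overline\beta(8)\approx1.23$).

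The interpolation arc is the real work, and the step I expect to be the main obstacle. First evaluate the starting height exactly at $b=8$: $x_{+-}(-0.1)=\sqrt{(97.33-\sqrt{9232.46})/128}\approx0.0986$, while $x_{+-}(0)=\underline\beta(8)\approx0.1019$ (the two radicands agree because $(b^2+4b+1)^2-4b^2=b^4+8b^3+14b^2+8b+1$), so the interpolation curve must end only about $10^{-4}$ above $\partial\Omega^+$ — the gap to bridge is tiny. Next redo the lower bound for $\partial_x(\Delta/A^2)$ from Subsection~\ref{subsec:interpolation} at $b=8$, this time keeping every $O(1)$ contribution explicit rather than discarding it against the leading $-1.5b=-12$: along the thin strip the curve traverses one has $xb\approx0.8$, hence $A\approx-1.6$ and $A^3\approx-4$, and honest bookkeeping of the numerator should leave $\partial_x(\Delta/A^2)$ bounded below by a positive constant of order one, far more than is needed for the drift curve $x_{+-}(t)+\frac14 c^2(t+0.1)^2$, $c^2=\partial_x(\Delta/A^2)$, to reach height $0.102$ by $t=0$; Lemma~\ref{lem:FloatAway}, which needs no largeness of $b$, then produces the drifting integral curve of \eqref{eq:HoelderVectorField} landing at $(0,0.102)$. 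One must also check that the hyperbola is positioned to separate correctly at $b=8$: its asymptotic slope $\sqrt{a}\,\beta=\sqrt 6\cdot0.102\approx0.25$ exceeds the asymptotic slope ($\approx0.18$) of the boundary of the rightmost component of $\Omega^-$, so the hyperbola stays to the left of that component, and a spot check at a couple of heights confirms there is no crossing closer in. Apart from this numerical bookkeeping — the previously negligible $O(1)$ terms and the hyperbola's placement — everything else (concatenating the three pieces into a single oriented barrier, and the confinement statements for $I^-(p)$ and $I^-(q)$) is a verbatim transcription of the proof of Lemma~\ref{lem:IminusExclusion}.
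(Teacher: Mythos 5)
Your construction matches the paper's: the same three-piece barrier through $p$, the same waypoint at $x_1=-0.1$, and the same hyperbola parameters $a=6$, $\beta=0.102$. Your verification of the hyperbolic piece -- substituting $b=8$, $a=6$, $\beta=0.102$ into~\eqref{eq:ConfirmHyperbolicBarrier} and checking positivity of the three coefficients $\approx 4.86$, $\approx 0.065$, $\approx 2.3\times 10^{-3}$ -- is exactly the paper's computation, your remark that $\underline\beta(8)<0.102<\overline\beta(8)$ via the exact radical is the same check, and the concatenation and separation argument at the end is a verbatim transcription of Lemma~\ref{lem:IminusExclusion}. Your observation that the hyperbola's asymptotic slope $\sqrt a\,\beta\approx 0.25$ should dominate the $\approx 0.18$ slope of the boundary of the rightmost component of $\Omega^-$ is a point the paper leaves implicit and is worth recording.

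The divergence -- and the gap -- is in the interpolation piece. You propose to re-derive the lower bound on $\partial_x(\Delta/A^2)$ from Subsection~\ref{subsec:interpolation} at $b=8$, keeping all the previously discarded $O(1)$ terms, and then invoke Lemma~\ref{lem:FloatAway}. But you never actually carry this out: \emph{``honest bookkeeping of the numerator should leave $\partial_x(\Delta/A^2)$ bounded below by a positive constant of order one''} is an assertion, not a computation, and you flag it yourself as the main obstacle. The estimate in Lemma~\ref{lem:BreachTheGap} genuinely discards $O(1)$ terms against $-1.5b$, which at $b=8$ is only $-12$, so its validity at $b=8$ is exactly what is at stake. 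The paper sidesteps this entirely and does something more elementary: it fixes the explicit curve $x_{+-}(t)+a(t+0.1)^2$, substitutes directly into the barrier criterion~\eqref{eq:Criterion3}, and verifies numerically (Figure~\ref{fig:interpolateB8A}) that positivity holds on $[-0.1,0]$ for all $a\lesssim 2.66$, which comfortably covers the $a\approx 0.012$ needed to land at height $0.102$ at $t=0$. Your route via Lemma~\ref{lem:FloatAway} is probably salvageable -- the deviation from $\partial\Omega^+$ only needs to grow to roughly $10^{-4}$ over the interval $[-0.1,0]$, so a very modest lower bound on $\partial_x(\Delta/A^2)$ would suffice -- but as written the one computation that makes the interpolation piece a barrier at $b=8$ is missing.
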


\begin{proof}
    The barrier we build follows the same pattern. The first piece of the barrier just follows $\partial\Omega^+$ as before. 
    
    For the interpolation piece, we use the same method as in Lemma~\ref{lem:BreachTheGap}: We set the barrier as the graph of $x_{+-}(t)+a(t+0.1)^2$. In order to confirm that this is a barrier for $a$ small enough, we reestablish the earlier discussions for $b=8$. Copying Subsection~\ref{subsec:DefineOmega}, we find that for $b=8$  we have $$\underline\beta(8)\sim 0.101884\mbox{ 
 and }\overline\beta\sim 1.22688.$$ 
    Next we follow Subsection~\ref{subsec:horizontal} and confirm that for $t\in[-0.1,0]$ the estimate $x_{+-}(t)\in\frac 1b(\frac 34,\frac 32)$ still holds. For this we merely need to confirm that for $b=8$ the right hand side of Inequality~\ref{eq:BoxEstimate} satisfies the estimate. Since the function $x_{+-}$ is monotone, we can evaluate that $x_{+-}(-0.1)\equiv 0.0982\equiv 0.786\cdot \frac18$, confirming the lower bound. The upper bound follows from the value of $\underline\beta(8) \equiv 0.101884\equiv 0.815072\frac 18$. 
    With these estimates, we can follow the computations of Subsection~\ref{subsec:interpolation} and confirm that this is indeed a barrier for $a$ small enough by plugging into the right hand side of the inequality~\ref{eq:Criterion3}
$$v^2(8t^2 -(1+t^2)G)+2vxt(-4(b+1)+G)+(2(b+1)^2x^2-(1+x^2)G).$$
If the value is positive for $t\in[-0.1,0]$, then the curve is a non-timelike barrier. 
In Figure~\ref{fig:interpolateB8A} you can see the values for the curve $x_{+-}(t)+a(t+0.1)^2$ for the values $a=1, a=2.66, a=3$ next to each other.  It is clearly visible that for small $a$ (until about 2.66) the function is positive for all $t\in[-0.1,0]$, but for large enough $a$ (after about 2.66) the function becomes negative for some $t$. 
\begin{figure}[hp]
\centering
\includegraphics[width=.3\textwidth]{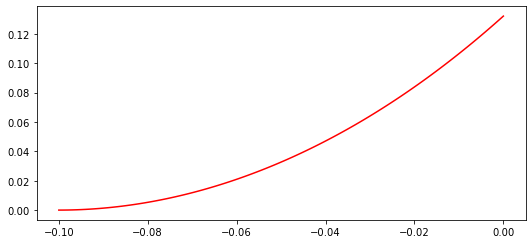}\hfill
\includegraphics[width=.3\textwidth]{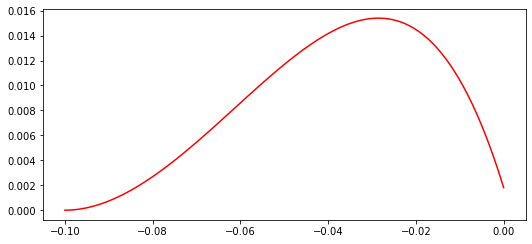}\hfill
\includegraphics[width=.3\textwidth]{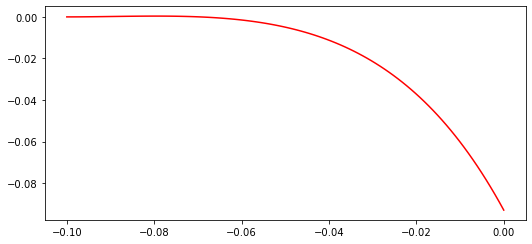}
    \caption{The interpolation for $b=8$ and $a=1, a=2.66, a=3$.}
    \label{fig:interpolateB8A}
\end{figure}

    For the third and last piece we confirm that the graph of $\gamma(t) = \beta \sqrt{1+a x^2}$ from Lemma~\ref{lem:HyperbolicBarrier} is for suitable $a,\beta$ also a barrier $b=8$. To confirm that  $a=6$ and $\beta=0.102$ are suitable, one can plug these values into the right hand side of Inequality~\ref{eq:ConfirmHyperbolicBarrier}. This results in the coefficients $$4.863 t^4 + 0.064 t^2 + 0.002$$ which are all positive, confirming the inequality and thus confirming that the curve with these coefficients constitutes a barrier. Note that $\underline \beta (8)=0.101884$, so $\beta=0.102$ is close to the limit.    
\end{proof}


\section{Appendix: Gradient flow lines are geodesics}\label{sec:appendix}
The gradient vector field is central to our understanding of this spacetime. In this appendix we will show that the gradient flow lines are geodesics. This knowledge is not necessary for the main construction of this paper, but we imagine that it may be of independent interest.

Note that the gradient vector field is linear $\nabla f(x)=2 diag(a_1,\ldots,a_n)x$, thus its flow is of the form 
$$\varphi_{\nabla f}^t(x)=(e^{2a_1t}x_1,\ldots,e^{2a_nt}x_n).$$ 
A generic flow line is a hyperbola asymptotic to the unstable manifold in negative time and to the stable manifold in positive time\footnote{Note that here the signs are inverted because the names unstable and stable are chosen for the negative gradient flow.} 

\begin{proposition}
    Gradient flow lines of $f$ are geodesics. Furthermore, they are maximizers of $g$-length between level sets of $f$.
\end{proposition}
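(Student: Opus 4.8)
The plan is to verify directly that each gradient flow line, when suitably reparametrized, satisfies the geodesic equation for the Morse--Lorentz metric $g = \|df\|^2 h - \zeta\, df^{\otimes 2}$, and then to upgrade this to length-maximality by a causality/calibration argument. I would work in the standard Morse chart with $h$ Euclidean and $f(\mathbf z) = \tfrac12 \mathbf z^\intercal A \mathbf z$, so $\nabla f = A\mathbf z$ and the flow is the linear flow $\varphi^t(\mathbf z) = e^{tA}\mathbf z$ (the excerpt writes $e^{2a_i t}$; I would just fix conventions so that $\dot\gamma = \nabla f(\gamma)$). Along such a curve $\gamma$, the quantities $\|df\|^2 = |A\gamma|^2$ and $f(\gamma)$ evolve in a controlled way, and the velocity $\dot\gamma = A\gamma$ is always parallel to $\nabla f$, i.e. purely in the ``timelike direction'' with no component in $\ker df$. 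I would first compute $g(\dot\gamma,\dot\gamma) = \|df\|^2 h(A\gamma,A\gamma) - \zeta\,\langle df,A\gamma\rangle^2 = (1-\zeta)\|A\gamma\|^4 < 0$, so $\gamma$ is timelike off the critical point, and its $g$-arclength is governed by a scalar ODE.

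For the geodesic equation itself, rather than grinding out Christoffel symbols of the conformal-plus-rank-one metric $g$, I would use a variational shortcut: show that gradient trajectories are critical points of the $g$-length (equivalently energy) functional among curves joining two fixed level sets $\{f = c_0\}$ and $\{f = c_1\}$, with free endpoints on those level sets. The key structural fact is that $g$ restricted to $\ker df$ is positive definite (spacelike) while the $\nabla f$-direction is timelike with $g(\nabla f,\nabla f) = (1-\zeta)\|df\|^4$; so for the Lorentzian length any ``sideways'' motion within a level set can only decrease the (imaginary) length, while the gradient direction is precisely the $g$-orthogonal complement of the level sets. Thus I would decompose an arbitrary competitor curve $\sigma(t)$ as motion along $\nabla f$ plus motion tangent to level sets, use $g$-orthogonality of these two pieces, and conclude that the length is maximized exactly when the level-set-tangential component vanishes identically — which characterizes reparametrized gradient flow lines. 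A first variation computation then shows these maximizers satisfy the geodesic equation (the transversality condition at the free endpoints is automatic since $\dot\gamma \parallel \nabla f \perp_g \{f=c_i\}$).

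Concretely, for the maximality statement I would parametrize a competitor by $f$ itself (legitimate since $f$ is strictly increasing along any future-timelike curve, as $f$ is a time function), writing $\sigma$ as a curve $s \mapsto \sigma(s)$ with $f(\sigma(s)) = s$. Then $\langle df, \dot\sigma\rangle = 1$ forces a fixed ``budget'' in the gradient direction, and
\[
g(\dot\sigma,\dot\sigma) = \|df\|^2\,h(\dot\sigma,\dot\sigma) - \zeta,
\]
so the Lorentzian length functional $\int \sqrt{-g(\dot\sigma,\dot\sigma)}\,ds = \int \sqrt{\zeta - \|df\|^2 h(\dot\sigma,\dot\sigma)}\,ds$ is maximized, pointwise in $s$ and hence globally, by making $h(\dot\sigma,\dot\sigma)$ as small as possible subject to $\langle df,\dot\sigma\rangle = 1$, i.e. by taking $\dot\sigma = \nabla f / \|df\|^2$, which is exactly the gradient flow reparametrized by $f$. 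This simultaneously proves maximality and (being the unique pointwise maximizer) identifies the geodesic.

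The main obstacle I anticipate is handling the degeneracy at the critical point: the metric $g$ vanishes there, $f$-parametrization breaks down on the stable/unstable manifolds, and a generic gradient line only \emph{asymptotically} approaches the critical point rather than passing through it, so for the ``between level sets'' statement with $c_0 < 0 < c_1$ one must be careful that competitor curves and the gradient line itself may need to pass near (or, as broken curves, through) $\mathbf 0$. I would restrict first to a single level-set band avoiding the critical value, prove everything cleanly there, and then remark that the broken-curve formalism of Section~\ref{subsec:CausalContinuity} extends the statement across the critical value, with the broken gradient line (down one stable ray, pause at $\mathbf 0$, up one unstable ray) being the limiting maximizer. A secondary technical point is checking that the pointwise optimization in the last paragraph is genuinely rigorous — i.e. that $h(\dot\sigma,\dot\sigma) \geq \langle df,\dot\sigma\rangle^2/\|df\|^2 = 1/\|df\|^2$ by Cauchy--Schwarz, with equality iff $\dot\sigma \parallel \nabla f$ — but this is routine.
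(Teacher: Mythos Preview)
Your proposal is correct and, once you reach the concrete computation in your third paragraph, it is essentially the paper's own argument: both bound $-g(\dot\gamma,\dot\gamma)$ by $(\zeta-1)\langle df,\dot\gamma\rangle^2$ via the Cauchy--Schwarz inequality $\|df\|^2\,h(\dot\gamma,\dot\gamma)\geq\langle df,\dot\gamma\rangle^2$ (equality iff $\dot\gamma\parallel\nabla f$), and then observe that $\int\langle df,\dot\gamma\rangle\,dt=f_1-f_0$ depends only on the endpoint level sets, giving the sharp bound $l^g(\gamma)\leq\sqrt{\zeta-1}\,(f_1-f_0)$ saturated exactly by gradient lines. The only difference is cosmetic: you reparametrize by $f$ first so that $\langle df,\dot\sigma\rangle=1$ and then minimize $h(\dot\sigma,\dot\sigma)$, whereas the paper keeps an arbitrary parametrization and applies Cauchy--Schwarz directly inside the integrand, reaching the same inequality in one line without the preliminary discussion of Christoffel symbols, free-boundary variations, or the orthogonal decomposition --- all of which you can safely drop.
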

\begin{proof}
    Let $\gamma:[0,1]\to M$ be a future pointing timelike curve connecting the level sets $\{f=f_0\}$ and $\{f=f_1\}$. Then by the usual Cauchy-Schwartz inequality, we have
    \begin{align*}
        l^g(\gamma)&=\int_0^1 \sqrt{-g(\dot\gamma,\dot\gamma) }dt\\
        &=\int_0^1 \sqrt{-\langle \nabla f,\nabla f\rangle \langle \dot\gamma,\dot\gamma\rangle + \zeta \langle \nabla f,\dot\gamma\rangle^2 }\\
        &\leq \int_0^1 \sqrt{-\langle\nabla f,\dot\gamma\rangle^2 + \zeta \langle \nabla f,\dot\gamma\rangle^2 }\\
        &=\sqrt{\zeta-1} \int_0^1 \langle \dot\gamma,\nabla f\rangle dt\\
        &= \sqrt{\zeta-1} (f_1-f_0),
    \end{align*}
    with equality iff $\dot\gamma$ is almost everywhere parallel to $\nabla f$. This means that gradient flow lines are $g$-length maximizers between two level sets of $f$ and thus in particular timelike geodesics.
\end{proof}

Note that also on the infinitesimal level $g(X_f,X_f)=g(\nabla f,\nabla f)/\|\nabla f\|^4 \equiv 1-\zeta$. Thus, the above proposition implies that
\begin{corollary}
The vector field 
\begin{equation}
X_f=\frac{\nabla f}{\|\nabla f\|^2_h}
\end{equation}
is a $g$ geodesic vector field on $\mathbb{R}^4\setminus\{0\}$ with constant speed $\sqrt{\zeta-1}$.
\end{corollary}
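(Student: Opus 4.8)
The final statement claims that the vector field $X_f = \nabla f / \|\nabla f\|_h^2$ is a $g$-geodesic vector field of constant speed $\sqrt{\zeta-1}$ on $\RR^4 \setminus \{0\}$. The constant-speed assertion is immediate: by Definition~\ref{def:g} we have $g(\nabla f, \nabla f) = \|\nabla f\|^2 \cdot \|\nabla f\|^2 - \zeta \langle \nabla f, \nabla f \rangle^2 = (1-\zeta)\|\nabla f\|^4$, so $g(X_f, X_f) = g(\nabla f, \nabla f)/\|\nabla f\|^4 = 1-\zeta$, which has constant magnitude $\zeta - 1$; the sign is negative because $\zeta > 1$, consistent with $X_f$ being timelike. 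So the real content is that $X_f$ is a geodesic vector field, i.e.\ that its integral curves are (affinely parametrized) $g$-geodesics.

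\textbf{Approach.} The cleanest route is to reuse the Proposition just proved: gradient flow lines are $g$-length maximizers between level sets, hence unparametrized timelike geodesics. The integral curves of $X_f$ are exactly the gradient flow lines of $f$ as point sets — $X_f$ is a positive reparametrization of $\nabla f$ away from critical points — so they too are timelike geodesics as unparametrized curves. It then remains to check that the specific parametrization given by the flow of $X_f$ is affine, i.e.\ that $\nabla_{X_f}^g X_f = 0$ rather than merely $\nabla_{X_f}^g X_f \parallel X_f$. The key observation that pins down the parametrization is precisely the constant-speed property: an integral curve of a vector field $X$ whose image is a geodesic and along which $g(X,X)$ is constant must be affinely parametrized, since $\frac{d}{dt} g(X,X) = 2 g(\nabla^g_X X, X) = 0$ forces the (a priori proportional to $X$) acceleration $\nabla^g_X X$ to be $g$-orthogonal to $X$, and a timelike vector cannot be orthogonal to itself unless it vanishes. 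So the plan is: (i) record $g(X_f, X_f) \equiv 1-\zeta$ from Definition~\ref{def:g}; (ii) invoke the Proposition to get that the images of integral curves of $X_f$ are timelike geodesics; (iii) combine constancy of $g(X_f,X_f)$ with the geodesic-image property to conclude the parametrization is affine, hence $X_f$ is a geodesic vector field on $\RR^4 \setminus \{0\}$.

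\textbf{Alternative / main obstacle.} An alternative is a direct computation of $\nabla^g_{X_f} X_f$ using the Morse--Lorentzian metric $g = \|df\|^2 h - \zeta\, df^{\otimes 2}$, the linearity $\nabla f(\mathbf z) = A\mathbf z$, and the explicit flow $\varphi^t_{\nabla f}(\mathbf z) = (e^{2a_i t} z_i)_i$; this is feasible but messy because one must compute Christoffel symbols of a metric with a conformal-type factor $\|df\|^2$ that is non-constant. The main obstacle in the clean approach is making step (iii) rigorous: one should be slightly careful that the Proposition gives a geodesic after \emph{some} reparametrization, and that $g(X,X)$ constant genuinely upgrades this to affineness — this is a standard fact but worth stating, namely that for any vector field $X$ with $g(X,X)$ constant and integral curves whose images are geodesics, one has $g(\nabla^g_X X, X) = \tfrac12 X\big(g(X,X)\big) = 0$ while $\nabla^g_X X = \lambda X$ for some function $\lambda$, so $\lambda\, g(X,X) = 0$; since $g(X,X) = 1-\zeta \neq 0$ we get $\lambda \equiv 0$. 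One should also note the domain: $X_f$ is smooth and nonvanishing precisely on $\RR^4 \setminus \{0\}$ since $0$ is the only critical point of $f$, so the statement is exactly on the claimed set.
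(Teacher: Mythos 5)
Your proposal is correct and follows the same route as the paper: note that $g(X_f,X_f)=g(\nabla f,\nabla f)/\|\nabla f\|^4 \equiv 1-\zeta$ from Definition~\ref{def:g}, then combine this with the preceding proposition. You spell out more carefully than the paper does why constant $g(X_f,X_f)\neq 0$ together with the unparametrized-geodesic property forces $\nabla^g_{X_f}X_f=0$, which is a useful clarification of a step the paper leaves implicit.
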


\end{document}